\documentclass[12pt,oneside]{amsart}
\usepackage[T1]{fontenc}
\usepackage[latin9]{inputenc}
\usepackage{mathtools}
\usepackage{amsbsy}
\usepackage{amstext}
\usepackage{amsthm}
\usepackage{amssymb}

\makeatletter
\numberwithin{equation}{section}
\numberwithin{figure}{section}
  \theoremstyle{plain}
  \newtheorem*{conjecture*}{\protect\conjecturename}
\theoremstyle{plain}
\newtheorem{thm}{\protect\theoremname}[section]
  \theoremstyle{plain}
  \newtheorem*{thm*}{\protect\theoremname}
  \theoremstyle{remark}
  \newtheorem{rem}[thm]{\protect\remarkname}
  \theoremstyle{plain}
  \newtheorem{prop}[thm]{\protect\propositionname}
  \theoremstyle{plain}
  \newtheorem{lem}[thm]{\protect\lemmaname}

\usepackage{dsfont}
\usepackage{pgf,tikz}
\usepackage{mathrsfs}


\usetikzlibrary{arrows}

\definecolor{xdxdff}{rgb}{0.49,0.49,1.}
\definecolor{qqqqff}{rgb}{0.,0.,1.}
\definecolor{ffqqqq}{rgb}{1.,0.,0.}

\makeatother

  \providecommand{\conjecturename}{Conjecture}
  \providecommand{\lemmaname}{Lemma}
  \providecommand{\propositionname}{Proposition}
  \providecommand{\remarkname}{Remark}
  \providecommand{\theoremname}{Theorem}
\providecommand{\theoremname}{Theorem}

\begin{document}

\title[Bunkbed conjecture on the complete graph for $p\geqslant1/2$]{A proof of the Bunkbed conjecture on the complete graph for $p\geqslant1/2$}

\author{Paul de Buyer}
\address[P. de Buyer]{Université Paris Nanterre - Modal'X, 200 avenue
de la République 92000 Nanterre, France}
\email{debuyer@math.cnrs.fr}

\keywords{Bunkbed Conjecture, Percolation, Combinatorics}

\subjclass{82B43, 60K35}

\begin{abstract}
The bunkbed of a graph $G$ is the graph $G\times\left\{ 0,1\right\} $.
It has been conjectured that in the independent bond percolation model,
the probability for $\left(u,0\right)$ to be connected with $\left(v,0\right)$
is greater than the probability for $\left(u,0\right)$ to be connected
with $\left(v,1\right)$, for any vertex $u$, $v$ of $G$. In this
article, we prove this conjecture for the complete graph in the case
of the independent bond percolation of parameter $p\geqslant1/2$. 
\end{abstract}

\maketitle

\section{Introduction}

Percolation theory has been widely studied over the last decades and
yet, several intuitive results are very hard to prove rigorously.
This is the case of the bunkbed conjecture formulated by Kasteleyn
(published as a remark in \cite{vdBKJCorrelation}) which investigates
a notion of graph distance through percolation theory. 

Consider a graph $G=\left(V,E\right)$ where $E$ is the set of edges
and delete each edge independently with probability $p$; we obtain
a random graph which law is noted $\mathbb{P}_{p}$. The main question
in percolation is to understand and bound the probability of two set
of vertices to be connected (for a general introduction on the subject,
see \cite{Grimmett}). Furthermore, one can properly build a distance
$d:V\times V$ between vertices using percolation, saying that for
three vertices $u,v,w\in V$, $v$ is further from $u$ than $w$
from $u$ if the probability for $u$ and $v$ to be connected is
smaller than the probability for $u$ and $w$ to be connected, and
ask if this distance coincide with the usual graph distance $d_{G}$.
However, the validity of this property depends on the graph and the
value of $p$, see figure $\ref{fig:Counter-example}$. 

\begin{figure}[!h]
\begin{center}
\begin{tikzpicture}[scale=0.75, line cap=round,line join=round,>=triangle 45,x=1cm,y=1cm]\clip(2,1) rectangle (10,8); 
	\draw [line width=1pt] (5,4)-- (6,6); 
	\draw [line width=1pt] (6,6)-- (6,4);
	\draw [line width=1pt] (6,4)-- (6,2);
	\draw [line width=1pt] (6,2)-- (5,4); 
	\draw [line width=1pt] (4,4)-- (6,2);
	\draw [line width=1pt] (6,2)-- (7,4);
	\draw [line width=1pt] (7,4)-- (6,6); 
	\draw [line width=1pt] (6,6)-- (8,4); 	
	\draw [line width=1pt] (8,4)-- (6,2);
	\draw [line width=1pt] (4,4)-- (6,6); 
	\begin{scriptsize}
		\draw [fill=black] (4,4) circle (3pt); 		
		\draw [fill=black] (6,4) circle (3pt); 
		\draw [fill=black] (6,6) circle (3pt);
		\draw[color=black] (6.8,6.1) node {\large $w$};
		\draw [fill=black] (6,2) circle (3pt);
		\draw[color=black] (6.7,2) node {\large $u$};
		\draw [fill=black] (5,4) circle (3pt);
		\draw [fill=black] (7,4) circle (3pt);
		\draw [fill=black] (8,4) circle (3pt);
		\draw[color=black] (8.6,4) node {\large $v$};
	\end{scriptsize} 
\end{tikzpicture}
\end{center}

\caption{\label{fig:Counter-example}Counter-example where the graph distance
may not coincide with the percolation distance}

\end{figure}
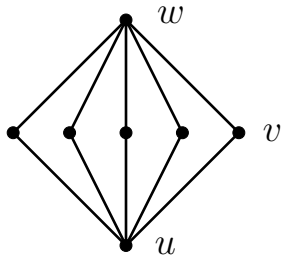

Before recalling the bunkbed conjecture, we give another problem closely
related. Consider the centered box of $B_{n}=\left[-n;n\right]^{d}\cap\mathbb{Z}^{d}$
and its border $\partial B_{n}=\left\{ x\in B_{n}:\left\Vert x\right\Vert _{\infty}=n\right\} $,
and consider the following function $f\colon x\mapsto\mathbb{P}_{p}\left(x\leftrightarrow\partial B_{n}\right)$.
An open question of interest is to ask if the minimum of the function
$f$ is achieved at the origin, \emph{i.e.} if $\min_{x}f\left(x\right)=f\left(0\right)$.
One can extend the question in the following way: consider $x$ and
$y$ two vertices of $B_{n}$ such that $d_{G}\left(x,\partial B_{n}\right)<d_{G}\left(y,\partial B_{n}\right)$,
is it true that $f\left(x\right)\geqslant f\left(y\right)$? To the
best knowledge of the author, these questions remain open. In order
to understand these problems, some structure has been added to the
graph giving rise to the study of the bunkbed conjecture defined as
follow.

A bunkbed graph $G=(V,E)$ of a graph $\widetilde{G}=(\widetilde{V},\widetilde{E})$
is the graph given by the superposition of two identical copy of $\widetilde{G}$
to which we add the edges connecting the symetrical vertices, see
figure $\ref{fig:BBCFigureQuiDechire}$; informally, we note $G=\widetilde{G}\times\left\{ \text{0;1}\right\} $.
Furthermore, we say that a vertex $u$ belongs to the bottom graph
if $u=\left(x,0\right)$, $u$ belongs to the top graph if $u=\left(x,1\right)$,
and $u'$ is the symmetrical of $u$ if $u=\left(x,i\right)$ and
$u'=\left(x,1-i\right)$. The statement of the conjecture is the following:
\begin{conjecture*}
Let $G$ be a bunkbed graph. Let $u$ and $v$ be two vertices of
the bottom graph, and $v'$ the symmetrical of $v$, then for any
$p\in\left[0;1\right]$ the following holds:
\[
\mathbb{P}_{p}\left(u\leftrightarrow v\right)\geqslant\mathbb{P}_{p}\left(u\leftrightarrow v'\right)
\]
\end{conjecture*}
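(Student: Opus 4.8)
The plan is to prove the conjecture by constructing an explicit, measure-preserving injection $\Phi$ from the event $\{u\leftrightarrow v'\}$ into the event $\{u\leftrightarrow v\}$ on the space of percolation configurations $\omega\in\{0,1\}^{E}$. Since $\mathbb{P}_{p}$ is a product measure, if $\Phi$ is injective and preserves the number of open edges (hence the weight $p^{|\omega|}(1-p)^{|E|-|\omega|}$ of each configuration), then summing $\mathbb{P}_{p}(\omega)=\mathbb{P}_{p}(\Phi(\omega))$ over all $\omega\in\{u\leftrightarrow v'\}$ gives $\mathbb{P}_{p}(u\leftrightarrow v')\leqslant\mathbb{P}_{p}(u\leftrightarrow v)$ for \emph{every} $p$ at once, which is exactly the conjecture with no restriction on $p$. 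The natural building block for $\Phi$ is the top-bottom reflection $\sigma$, the automorphism of $G=\widetilde{G}\times\{0,1\}$ that fixes each column and swaps $(x,0)\leftrightarrow(x,1)$.

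To turn $\sigma$ into a useful map I would not reflect the whole configuration, since the global reflection sends $\{u\leftrightarrow v'\}$ to $\{u'\leftrightarrow v\}$, the wrong event. Instead I would reflect only a canonically chosen part. Given $\omega$ with $u\leftrightarrow v'$, fix the open cluster $C$ of $u$; because $u$ lies in the bottom copy and $v'$ in the top copy, every open path from $u$ to $v'$ must traverse an odd number of rungs. I would single out a distinguished rung, for instance the rung $r$ joining $(c,0)$ and $(c,1)$ that is closest to $v'$ along the lexicographically-first open path from $u$ to $v'$, and define $\Phi(\omega)$ by applying $\sigma$ to all edges of $\omega$ lying in the part of $C$ beyond $r$, while leaving the remaining edges untouched. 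As $\sigma$ is a graph automorphism this operation preserves the open-edge count, and its intended effect is to straighten the single net crossing so that the reflected path terminates at $v$ rather than $v'$, landing in $\{u\leftrightarrow v\}$.

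I would then have to verify the two properties that make the argument uniform in $p$: that $\Phi(\omega)$ genuinely lies in $\{u\leftrightarrow v\}$, and that $\Phi$ is injective, ideally by exhibiting an explicit inverse that reconstructs the distinguished rung from $\Phi(\omega)$ and reflects back. A clean way to organise both checks is to condition on the configuration of all horizontal edges together with the set of open rungs, reducing the claim to a deterministic combinatorial statement about how the clusters of the bottom and top copies are glued along the open rungs. Inside each such conditioning the measure is uniform, so a purely combinatorial injection suffices and the parameter $p$ drops out entirely.

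The hard part will be precisely the well-definedness and injectivity of this reflection. Isolating ``the part of $C$ beyond $r$'' requires $r$ to separate the cluster canonically, but in a general base graph $\widetilde{G}$ the cluster can wrap around and re-enter both copies through several other rungs, so reflecting one sub-cluster may destroy connectivity that was routed through a different rung, or two distinct source configurations may collapse to the same image, breaking injectivity. This is exactly the point where the known partial cases (trees, wheels, and the symmetric situation of the complete graph) succeed only because their extra structure forces the separating rung to be canonical, and I expect the whole difficulty of the general conjecture to concentrate here: in identifying a combinatorial condition on $\widetilde{G}$ under which the distinguished rung truly cuts the cluster into a reflectable piece, and in treating the residual configurations for which no such canonical cut exists.
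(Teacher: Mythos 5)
There is a fundamental gap, and it begins before the construction itself: the statement you were handed is the bunkbed conjecture, which this paper does not prove. The paper states it as an open conjecture and its actual contribution (Theorem \ref{thm:BBCTheoremePrincipal}) is only the special case of the complete graph $K_{n}$ with $p\geqslant1/2$, established by a non-bijective counting argument: decompose configurations according to the cluster of $u$ (the main component), count the induced subgraphs containing $v$ versus $v'$ via $C_{1}$, $C_{2}$ and $C_{diff}$, and combine a monotonicity lemma for $\left(1-p\right)^{B}P$ (Lemma \ref{lem:BBCComparaisonGC}, where the hypothesis $p\geqslant1/2$ enters) with the exact cancellation $\sum_{i}C_{diff}=0$ (Lemma \ref{lem:BBCsommeCDiff}). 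Your proposal aims at strictly more --- all graphs, all $p$ --- but it is a program rather than a proof: the step you yourself flag as hard (well-definedness and injectivity of the partial reflection) is precisely the missing content, and it genuinely fails. The distinguished rung $r$ is in general not a cut edge of the cluster, so ``the part of $C$ beyond $r$'' is undefined; reflecting an edge set that is not closed under columns is not even a well-defined map on configurations (if $\left\{ \left(x,0\right),\left(y,0\right)\right\}$ lies in the reflected part while its mirror $\left\{ \left(x,1\right),\left(y,1\right)\right\}$ is open outside it, the images collide, the open-edge count changes, and injectivity is lost); and even on a column-closed region the reflection alters the lexicographically-first path, so $r$ cannot be reconstructed from $\Phi\left(\omega\right)$, which destroys the explicit inverse you need.

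Worse, the paper itself supplies evidence against your route. In Section 5 the author computes that for the line segment the difference $\mathbb{P}_{p}\left(u\leftrightarrow v\right)-\mathbb{P}_{p}\left(u\leftrightarrow v'\right)$ equals exactly the probability of the all-rungs-closed connection event, while already for the triangle it is strictly larger, and observes that $C_{diff}\left(x,y,z\right)$ changes sign --- for some shapes of main component there are \emph{more} admissible subgraphs reaching $v'$ than $v$. The author reads this precisely as an indication that constructing a surjection (equivalently, a measure-preserving injection such as your $\Phi$) between the two events ``would be a difficult task'': any such injection must mix configurations across the $\left(x,y,z\right)$-classes in a way no local reflection does. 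Finally, note that the conjecture has since been disproven in general (Gladkov, Pak and Zimin, 2024), so no construction of your type can work for arbitrary $\widetilde{G}$ and all $p$; a reflection injection can at best be salvaged under additional structural hypotheses on $\widetilde{G}$, exactly as in the known tree and outerplanar cases you mention.
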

Litterature about the conjecture is fairly poor even if the problem
aroused the interest of quite a number of researchers. The work of
S. Linusson and M. Leander \cite{leandersjalvstandiga,linusson2011percolation}
who proved that the conjecture holds for a subclass of the planar
graphs, called outerplanar graphs and wheel graphs. The proof used
in their paper, called the minimal counter-example, is combinatoric
and relies on the structure of the graph making it difficult to extend
to a more general class of graph. 

O. Häggström proved in \cite{haggstrom2003probability} a similar
conjecture for the Ising model on a general graph. Recall that the
Ising model assign a value $1$ or $-1$ to each vertex accordind
to a Gibbs measure, and it has been shown that the value of $u$ has
more influence on the value of $v$ than $v'$ in the sense that $\mathbb{E}\left[\omega\left(u\right)\omega\left(v\right)\right]\geqslant\mathbb{E}\left[\omega\left(u\right)\omega\left(v'\right)\right]$
where $\omega$ is a configuration in $\left\{ -1;1\right\} ^{V}$
and $\omega\left(u\right)$ is the value assigned to the vertex $u$.
The proof relies on the link between FK-percolation and Ising model
as well as some properties of the Gibbs measure which can not extend
to the independent case.

Related to the study of this conjecture, a type of Harris-FKG inequality
conditionned by a decreasing event has been proven in \cite{vdBHK2006,vdBKJCorrelation}.

Furthermore, in the random walk field, an equivalent conjecture of
the bunkbed conjecture formulated in \cite{bollobas1997random} is
that starting from a vertex $u$, the first time of reaching a vertex
$v$ is shorter (in some sense) that the first time of reaching the
vertex $v'$ which has been proved in \cite{haggstrom1998conjecture}.

Finally, in the article \cite{RCequivalent} of Rudzinski and Smyth,
an extensive list of equivalent reformulation of the Bunkbed Conjecture
is presented.

In this article, in the context of independent bond percolation model,
we prove the following theorem:
\begin{thm}
\label{thm:BBCTheoremePrincipal}The bunkbed conjecture is verified
for the complete graph with $n\geqslant1$ vertices when the paramater
of percolation $p\geqslant1/2$.
\end{thm}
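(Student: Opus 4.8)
The plan is to use the vertex-transitivity of the complete graph to fix the two distinguished vertices as $u=(1,0)$ and $v=(2,0)$, with $v'=(2,1)$; the cases $n=1$ and $u=v$ are trivial, so assume the underlying vertices $1$ and $2$ are distinct. I would first condition on the states of the $n$ vertical (pole) edges, isolating the pole $\{(2,0),(2,1)\}$ at the target vertex. On the event that this pole is open, $v$ and $v'$ lie in the same cluster, so $\{u\leftrightarrow v\}$ and $\{u\leftrightarrow v'\}$ coincide there and contribute equally to both sides. Hence
\[
\mathbb{P}_p(u\leftrightarrow v)-\mathbb{P}_p(u\leftrightarrow v')=\mathbb{P}_p\big(u\leftrightarrow v,\ \{(2,0),(2,1)\}\text{ closed}\big)-\mathbb{P}_p\big(u\leftrightarrow v',\ \{(2,0),(2,1)\}\text{ closed}\big),
\]
and it suffices to produce, on the subspace where the pole at $2$ is closed, a weight-respecting injection from configurations realizing $\{u\leftrightarrow v'\}$ into configurations realizing $\{u\leftrightarrow v\}$.

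To build this injection, given such a configuration $\omega$ I would take the lexicographically first open simple path $\gamma$ from $(1,0)$ to $(2,1)$. Since $\gamma$ starts in the bottom copy and ends in the top copy while the pole at $2$ is closed, it crosses at least one pole; let $w$ be the vertex of its last pole crossing, so $\gamma$ runs from $(1,0)$ to $(w,0)$, up to $(w,1)$, and then entirely inside the top copy to $(2,1)$. I would define $\phi(\omega)$ by opening, for every top edge $\{(x,1),(y,1)\}$ used by this terminal top-segment, its bottom mirror $\{(x,0),(y,0)\}$, and changing nothing else. In $\phi(\omega)$ the prefix of $\gamma$ still joins $(1,0)$ to $(w,0)$ and the new bottom mirrors join $(w,0)$ to $(2,0)$, so $u\leftrightarrow v$; the pole at $2$ is untouched, so $\phi(\omega)$ stays in the target subspace. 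Crucially $\phi$ only ever opens edges, so the open-edge count never decreases, whence $\mathbb{P}_p(\phi(\omega))\geqslant\mathbb{P}_p(\omega)$ precisely because $p\geqslant 1/2$ forces $p/(1-p)\geqslant 1$; summing over $\omega$ and using injectivity then gives the comparison. (That the bound is genuinely $p$-dependent is consistent with the exact computation on $K_2$, where the difference equals $p(1-p)^2$ and the clean reflection would be measure-preserving.)

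The delicate point, and the step I expect to be the main obstacle, is the injectivity (invertibility) of $\phi$: to recover $\omega$ from $\phi(\omega)$ one must identify which bottom edges were created and reconstruct the terminal top-segment, yet an edge opened by the reflection is a priori indistinguishable from one already open in $\omega$. Resolving this requires replacing pure edge-addition by a genuine reflection that swaps the states of the terminal top-segment with those of its bottom mirrors, and then showing that the canonical choice of $\gamma$ together with a rule for reading the reflected region off the image makes the procedure reversible. The collisions—where a bottom mirror of a reflected top edge was already open—are exactly the configurations on which the reflection strictly raises the open-edge count, and they are where the hypothesis $p\geqslant 1/2$ is indispensable; controlling these collisions while preserving injectivity, uniformly in $n$, is the crux. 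Finally, the auxiliary analysis over the remaining poles (notably whether the pole at the source vertex $1$ is open) and the trivial small cases would be assembled to conclude.
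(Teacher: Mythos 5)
Your reduction to the subspace where the pole $\{(2,0),(2,1)\}$ is closed is correct, and your weight computation (opening edges multiplies the configuration probability by $p/(1-p)\geqslant 1$) is also correct; but the step you yourself call the crux --- injectivity of $\phi$ --- is not a technical detail to be deferred, it is the entire content of the problem, and the map as you define it is provably \emph{not} injective. Already on $K_{2}$: the domain (pole at $2$ closed, $u\leftrightarrow v'$) consists of exactly two configurations, namely pole at $1$ and top edge open, with the bottom edge either open or closed; in both cases the unique open path $\gamma$ from $(1,0)$ to $(2,1)$ is the same, its terminal top-segment is the single top edge, and $\phi$ opens its bottom mirror --- so both configurations are sent to the same image. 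This collision mechanism is generic: whenever $\omega^{*}=\phi(\omega)$ differs from $\omega$ only by newly opened bottom mirrors, $\omega^{*}$ lies in the domain as well, the canonical path in $\omega^{*}$ can be the same $\gamma$, and then $\phi(\omega^{*})=\omega^{*}=\phi(\omega)$ with $\omega^{*}\neq\omega$. Your proposed repair (swapping states instead of opening edges, then reconstructing the reflected region from the image) is exactly the reconstruction problem that makes reflection arguments for the bunkbed conjecture hard; no rule for carrying it out is given, so the proposal does not constitute a proof.

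The paper's proof takes an entirely different, non-bijective route, and its concluding section explicitly warns against yours: since the theorem's inequality is \emph{strict} beyond the ``all poles closed'' contribution, and since the quantity $C_{diff}$ changes sign, the author remarks that building a configuration-by-configuration surjection/injection would be a difficult task. Instead, the paper classifies configurations by the cluster of $u$ (the main component), which in the bunkbed of $K_{n}$ is determined up to isomorphism by the triple $(x,y,z)$ of bottom vertices, top vertices, and symmetric pairs it contains. This gives the exact expansion
\begin{equation*}
\mathbb{P}_{p}\left(u\leftrightarrow v\right)-\mathbb{P}_{p}\left(u\leftrightarrow v'\right)=\sum_{x,y,z}P\left(x,y,z\right)\left(1-p\right)^{B\left(x,y,z\right)}\left(C_{1}-C_{2}\right)\left(x,y,z\right),
\end{equation*}
with closed-form binomial expressions for $C_{1}$ and $C_{2}$ (Propositions \ref{pro:BBCPropositionC1} and \ref{pro:BBCPropositionC2}). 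Positivity then follows from two facts: the partial sums of $C_{diff}$ along anti-diagonals vanish identically (Lemma \ref{lem:BBCsommeCDiff}, a telescoping identity), and for $p\geqslant1/2$ the weight $(1-p)^{B}P$ is monotone along those anti-diagonals in a way compatible with the sign pattern of $C_{diff}$ (Lemma \ref{lem:BBCComparaisonI0}); the hypothesis $p\geqslant1/2$ enters only there. If you wish to salvage your approach, you must solve the invertibility problem uniformly in $n$, which is precisely the obstruction the paper's counting argument is designed to bypass.
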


The paper is organized in the following way. In the second section,
we introduce formally all the notations and the results. In the third
section, we give the proof of the main theorem which uses two technical
lemmas proven in the fourth section. In the fifth section, we prove
the secondary results, give deeper explanations the consequences of
the proof, and possible leads to solve the conjecture.

\section{Notations et results}

In this section, we introduce formally the model, the notations and
the main result. We recall that the complete graph with $n$ vertices
is noted $K_{n}$.

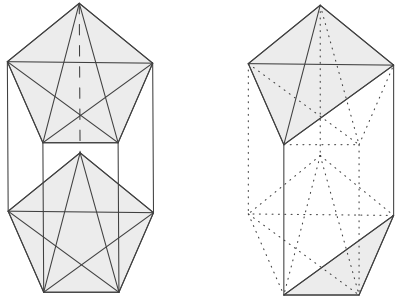
\begin{figure}[!h]
\begin{center}

\definecolor{uququq}{rgb}{0.25,0.25,0.25}  
\begin{tikzpicture}[line cap=round,scale=0.5, line join=round,>=triangle 45,x=1.0cm,y=1.0cm] \clip(15.,1.) rectangle (29,10.6);

\fill[color=uququq,fill=uququq,fill opacity=0.1] (18.94567,9.68891) -- (17.03361,8.135374) -- (17.97830,5.97830) -- (19.97830,5.97830) -- (20.89755,8.09554) -- cycle; 
\fill[color=uququq,fill=uququq,fill opacity=0.1] (18.96736386462253,5.710609775423925) -- (17.055312521904767,4.157068059465733) -- (18.,2.) -- (20.,2.) -- (20.919249610313596,4.117233656492446) -- cycle;
\fill[color=uququq,fill=uququq,fill opacity=0.1] (25.352206533612073,9.636951268385594) -- (23.44015519089431,8.083409552427405) -- (24.384842668989542,5.926341492961671) -- (27.30409227930314,8.04357514945412) -- cycle;
\fill[color=uququq,fill=uququq,fill opacity=0.1] (27.30409227930314,4.043575149454118) -- (24.384842668989542,1.9263414929616722) -- (26.384842668989542,1.9263414929616722) -- cycle;

\draw [color=uququq] (4.9673638646225315,5.710609775423926)-- (3.055312521904767,4.157068059465734); \draw [color=uququq] (3.055312521904767,4.157068059465734)-- (4.,2.); 
\draw [color=uququq] (4.,2.)-- (6.,2.); 
\draw [color=uququq] (6.,2.)-- (6.919249610313596,4.117233656492448); 
\draw [color=uququq] (4.9673638646225315,5.710609775423926)-- (6.919249610313596,4.117233656492448); 
\draw [color=uququq] (4.9673638646225315,5.710609775423926)-- (4.,2.);
\draw [color=uququq] (4.,2.)-- (6.919249610313596,4.117233656492448); 
\draw [color=uququq] (6.919249610313596,4.117233656492448)-- (3.055312521904767,4.157068059465734);
\draw [color=uququq] (3.055312521904767,4.157068059465734)-- (6.,2.); 
\draw [color=uququq] (4.9673638646225315,5.710609775423926)-- (6.,2.); 
\draw [color=uququq] (4.945670332138879,9.68891624294028)-- (3.0336189894211154,8.135374526982085); 
\draw [color=uququq] (3.0336189894211154,8.135374526982085)-- (3.978306467516348,5.978306467516349); 
\draw [color=uququq] (3.978306467516348,5.978306467516349)-- (5.978306467516348,5.978306467516349); 
\draw [color=uququq] (5.978306467516348,5.978306467516349)-- (6.897556077829944,8.095540124008801); 
\draw [color=uququq] (4.945670332138879,9.68891624294028)-- (6.897556077829944,8.095540124008801); 
\draw [color=uququq] (4.945670332138879,9.68891624294028)-- (3.978306467516348,5.978306467516349); 
\draw [color=uququq] (3.978306467516348,5.978306467516349)-- (6.897556077829944,8.095540124008801); 
\draw [color=uququq] (6.897556077829944,8.095540124008801)-- (3.0336189894211154,8.135374526982085); 
\draw [color=uququq] (3.0336189894211154,8.135374526982085)-- (5.978306467516348,5.978306467516349); 
\draw [color=uququq] (4.945670332138879,9.68891624294028)-- (5.978306467516348,5.978306467516349); 
\draw [color=uququq] (3.0336189894211154,8.135374526982085)-- (3.055312521904767,4.157068059465734); 
\draw [color=uququq] (3.978306467516348,5.978306467516349)-- (4.,2.); 
\draw [color=uququq] (5.978306467516348,5.978306467516349)-- (6.,2.); 
\draw [color=uququq] (6.897556077829944,8.095540124008801)-- (6.919249610313596,4.117233656492448); 
\draw [dash pattern=on 4pt off 4pt,color=uququq] (4.945670332138879,9.68891624294028)-- (4.9673638646225315,5.710609775423926); 
\draw [dotted,color=uququq] (11.352206533612074,5.636951268385596)-- (9.440155190894309,4.0834095524274066); 
\draw [dotted,color=uququq] (9.440155190894309,4.0834095524274066)-- (10.384842668989542,1.9263414929616722); 
\draw [color=uququq] (10.384842668989542,1.9263414929616722)-- (12.384842668989542,1.9263414929616722);
\draw [color=uququq] (12.384842668989542,1.9263414929616722)-- (13.30409227930314,4.043575149454119); 
\draw [dotted,color=uququq] (11.352206533612074,5.636951268385596)-- (13.30409227930314,4.043575149454119); 
\draw [dotted,color=uququq] (11.352206533612074,5.636951268385596)-- (10.384842668989542,1.9263414929616722); 
\draw [color=uququq] (10.384842668989542,1.9263414929616722)-- (13.30409227930314,4.043575149454119);
\draw [dotted,color=uququq] (13.30409227930314,4.043575149454119)-- (9.440155190894309,4.0834095524274066); 
\draw [dotted,color=uququq] (9.440155190894309,4.0834095524274066)-- (12.384842668989542,1.9263414929616722); 
\draw [dotted,color=uququq] (11.352206533612074,5.636951268385596)-- (12.384842668989542,1.9263414929616722); 
\draw [color=uququq] (11.35220653361207,9.636951268385594)-- (9.440155190894309,8.083409552427405);
\draw [color=uququq] (9.440155190894309,8.083409552427405)-- (10.384842668989542,5.926341492961672); 
\draw [dotted,color=uququq] (10.384842668989542,5.926341492961672)-- (12.384842668989542,5.926341492961672); 
\draw [dotted,color=uququq] (12.384842668989542,5.926341492961672)-- (13.30409227930314,8.04357514945412);
\draw [color=uququq] (11.35220653361207,9.636951268385594)-- (13.30409227930314,8.04357514945412); 
\draw [color=uququq] (11.35220653361207,9.636951268385594)-- (10.384842668989542,5.926341492961672); 
\draw [color=uququq] (10.384842668989542,5.926341492961672)-- (13.30409227930314,8.04357514945412); 
\draw [color=uququq] (13.30409227930314,8.04357514945412)-- (9.440155190894309,8.083409552427405); 
\draw [dotted,color=uququq] (9.440155190894309,8.083409552427405)-- (12.384842668989542,5.926341492961672); 
\draw [dotted,color=uququq] (11.35220653361207,9.636951268385594)-- (12.384842668989542,5.926341492961672); 
\draw [dotted,color=uququq] (9.440155190894309,8.083409552427405)-- (9.440155190894309,4.0834095524274066); 
\draw [color=uququq] (10.384842668989542,5.926341492961672)-- (10.384842668989542,1.9263414929616722);
\draw [dotted,color=uququq] (12.384842668989542,5.926341492961672)-- (12.384842668989542,1.9263414929616722); 
\draw [color=uququq] (13.30409227930314,8.04357514945412)-- (13.30409227930314,4.043575149454119); 
\draw [dotted,color=uququq] (11.35220653361207,9.636951268385594)-- (11.352206533612074,5.636951268385596); 
\draw [color=uququq] (18.96736386462253,5.710609775423925)-- (17.055312521904767,4.157068059465733);
\draw [color=uququq] (17.055312521904767,4.157068059465733)-- (18.,2.); \draw [color=uququq] (18.,2.)-- (20.,2.); 
\draw [color=uququq] (20.,2.)-- (20.919249610313596,4.117233656492446); 
\draw [color=uququq] (18.96736386462253,5.710609775423925)-- (20.919249610313596,4.117233656492446); 
\draw [color=uququq] (18.96736386462253,5.710609775423925)-- (18.,2.); 
\draw [color=uququq] (18.,2.)-- (20.919249610313596,4.117233656492446); 
\draw [color=uququq] (20.919249610313596,4.117233656492446)-- (17.055312521904767,4.157068059465733);
\draw [color=uququq] (17.055312521904767,4.157068059465733)-- (20.,2.); 
\draw [color=uququq] (18.96736386462253,5.710609775423925)-- (20.,2.); 
\draw [color=uququq] (18.945670332138878,9.68891624294028)-- (17.033618989421115,8.135374526982087); 
\draw [color=uququq] (17.033618989421115,8.135374526982087)-- (17.978306467516347,5.978306467516348); 
\draw [color=uququq] (17.978306467516347,5.978306467516348)-- (19.978306467516347,5.978306467516348); 
\draw [color=uququq] (19.978306467516347,5.978306467516348)-- (20.897556077829943,8.095540124008803); 
\draw [color=uququq] (18.945670332138878,9.68891624294028)-- (20.897556077829943,8.095540124008803); 
\draw [color=uququq] (18.945670332138878,9.68891624294028)-- (17.978306467516347,5.978306467516348); 
\draw [color=uququq] (17.978306467516347,5.978306467516348)-- (20.897556077829943,8.095540124008803); 
\draw [color=uququq] (20.897556077829943,8.095540124008803)-- (17.033618989421115,8.135374526982087); 
\draw [color=uququq] (17.033618989421115,8.135374526982087)-- (19.978306467516347,5.978306467516348); 
\draw [color=uququq] (18.945670332138878,9.68891624294028)-- (19.978306467516347,5.978306467516348); 
\draw [color=uququq] (17.033618989421115,8.135374526982087)-- (17.055312521904767,4.157068059465733);
\draw [color=uququq] (17.978306467516347,5.978306467516348)-- (18.,2.); 
\draw [color=uququq] (19.978306467516347,5.978306467516348)-- (20.,2.); 
\draw [color=uququq] (20.897556077829943,8.095540124008803)-- (20.919249610313596,4.117233656492446);
\draw [dash pattern=on 4pt off 4pt,color=uququq] (18.945670332138878,9.68891624294028)-- (18.96736386462253,5.710609775423925); 
\draw [dotted,color=uququq] (25.352206533612073,5.636951268385595)-- (23.44015519089431,4.083409552427406); 
\draw [dotted,color=uququq] (23.44015519089431,4.083409552427406)-- (24.384842668989542,1.9263414929616722);
\draw [color=uququq] (24.384842668989542,1.9263414929616722)-- (26.384842668989542,1.9263414929616722);
\draw [color=uququq] (26.384842668989542,1.9263414929616722)-- (27.30409227930314,4.043575149454118);
\draw [dotted,color=uququq] (25.352206533612073,5.636951268385595)-- (27.30409227930314,4.043575149454118); 
\draw [dotted,color=uququq] (25.352206533612073,5.636951268385595)-- (24.384842668989542,1.9263414929616722); 
\draw [color=uququq] (24.384842668989542,1.9263414929616722)-- (27.30409227930314,4.043575149454118); 
\draw [dotted,color=uququq] (27.30409227930314,4.043575149454118)-- (23.44015519089431,4.083409552427406); 
\draw [dotted,color=uququq] (23.44015519089431,4.083409552427406)-- (26.384842668989542,1.9263414929616722); 
\draw [dotted,color=uququq] (25.352206533612073,5.636951268385595)-- (26.384842668989542,1.9263414929616722); 
\draw [color=uququq] (25.352206533612073,9.636951268385594)-- (23.44015519089431,8.083409552427405); 
\draw [color=uququq] (23.44015519089431,8.083409552427405)-- (24.384842668989542,5.926341492961671);
\draw [dotted,color=uququq] (24.384842668989542,5.926341492961671)-- (26.384842668989542,5.926341492961671); 
\draw [dotted,color=uququq] (26.384842668989542,5.926341492961671)-- (27.30409227930314,8.04357514945412); 
\draw [color=uququq] (25.352206533612073,9.636951268385594)-- (27.30409227930314,8.04357514945412); 
\draw [color=uququq] (25.352206533612073,9.636951268385594)-- (24.384842668989542,5.926341492961671); 
\draw [color=uququq] (24.384842668989542,5.926341492961671)-- (27.30409227930314,8.04357514945412); 
\draw [color=uququq] (27.30409227930314,8.04357514945412)-- (23.44015519089431,8.083409552427405); 
\draw [dotted,color=uququq] (23.44015519089431,8.083409552427405)-- (26.384842668989542,5.926341492961671);
\draw [dotted,color=uququq] (25.352206533612073,9.636951268385594)-- (26.384842668989542,5.926341492961671); 
\draw [dotted,color=uququq] (23.44015519089431,8.083409552427405)-- (23.44015519089431,4.083409552427406); 
\draw [color=uququq] (24.384842668989542,5.926341492961671)-- (24.384842668989542,1.9263414929616722); 
\draw [dotted,color=uququq] (26.384842668989542,5.926341492961671)-- (26.384842668989542,1.9263414929616722); 
\draw [color=uququq] (27.30409227930314,8.04357514945412)-- (27.30409227930314,4.043575149454118); 
\draw [dotted,color=uququq] (25.352206533612073,9.636951268385594)-- (25.352206533612073,5.636951268385595); 
\draw [color=uququq] (18.945670332138878,9.68891624294028)-- (17.033618989421115,8.135374526982087); 
\draw [color=uququq] (17.033618989421115,8.135374526982087)-- (17.978306467516347,5.978306467516348); 
\draw [color=uququq] (17.978306467516347,5.978306467516348)-- (19.978306467516347,5.978306467516348); 
\draw [color=uququq] (19.978306467516347,5.978306467516348)-- (20.897556077829943,8.095540124008803); 
\draw [color=uququq] (20.897556077829943,8.095540124008803)-- (18.945670332138878,9.68891624294028); 
\draw [color=uququq] (18.96736386462253,5.710609775423925)-- (17.055312521904767,4.157068059465733); 
\draw [color=uququq] (17.055312521904767,4.157068059465733)-- (18.,2.);
\draw [color=uququq] (18.,2.)-- (20.,2.); 
\draw [color=uququq] (20.,2.)-- (20.919249610313596,4.117233656492446); 
\draw [color=uququq] (20.919249610313596,4.117233656492446)-- (18.96736386462253,5.710609775423925); 
\draw [color=uququq] (25.352206533612073,9.636951268385594)-- (23.44015519089431,8.083409552427405); 
\draw [color=uququq] (23.44015519089431,8.083409552427405)-- (24.384842668989542,5.926341492961671); 
\draw [color=uququq] (24.384842668989542,5.926341492961671)-- (27.30409227930314,8.04357514945412); 
\draw [color=uququq] (27.30409227930314,8.04357514945412)-- (25.352206533612073,9.636951268385594); 
\draw [color=uququq] (27.30409227930314,4.043575149454118)-- (24.384842668989542,1.9263414929616722); 
\draw [color=uququq] (24.384842668989542,1.9263414929616722)-- (26.384842668989542,1.9263414929616722); 
\draw [color=uququq] (26.384842668989542,1.9263414929616722)-- (27.30409227930314,4.043575149454118); 
\end{tikzpicture}\caption{Bunkbed graph of $K_{5}$ and an element of $G_{3,4,2}$}
\label{fig:BBCFigureQuiDechire}

\end{center}
\end{figure}

We call the bunkbed graph $G=\left(V,E\right)$ of a graph $\widetilde{G}=\left(\widetilde{V},\widetilde{E}\right)$
(called the original graph) is defined by: 
\begin{eqnarray*}
V & = & \tilde{V}\times\left\{ 0,1\right\} \\
E & = & \left\{ \left\{ \left(x,0\right),\left(y,0\right)\right\} :\left\{ x,y\right\} \in\tilde{E}\right\} \cup\left\{ \left\{ \left(x,1\right),\left(y,1\right)\right\} :\left\{ x,y\right\} \in\tilde{E}\right\} \\
 &  & \phantom{\left\{ \left\{ \left(x,0\right),\left(y,0\right)\right\} :\left\{ x,y\right\} \in\tilde{E}\right\} }\cup\left\{ \left\{ \left(x,0\right),\left(x,1\right)\right\} \right\} 
\end{eqnarray*}
An example is given in the figure $\ref{fig:BBCFigureQuiDechire}$.
As previously explained in the introduction, it is natural to distinguish
the vertices in the bottom graph, the set of vertices which can be
written $\left(x,0\right)$, and the vertices in the top graph, the
set of $\left(x,1\right)$. We call the symmetrical of a vertex $u=\left(x,i\right)$,
the vertex $u'=\left(x,1-i\right)$. In the rest of the article, letters
$u$ and $v$ will designate vertices of the bottom graph.

The percolation model is defined as follow. Consider a graph $G=\left(V,E\right)$
where $V$ is the set of vertices and $E$ the set of unoriented egdes.
We open each edges of $E$ independently with probability $p$ and
close them with probability $1-p$ and we write $\mathbb{P}_{p}$
the law associated to this percolation model. We call a configuration,
an element $\omega=\left(\omega_{e}\right)_{e\in E}\in\left\{ 0,1\right\} ^{E}$
corresponding to the bond percolation model where $\omega_{e}=0$
means that the edge $e$ is closed and $1$ means that the edge $e$
is open. We call a path between vertices $u$ and $v$ the set $\gamma$
of edges $\gamma=\left\{ e_{1},...,e_{n}\in E;e_{1}=\left(u,x_{1}\right),e_{2}=\left(x_{2},x_{3}\right),\ldots,e_{n}=\left(x_{n-1},v\right)\right\} $.
For a configuration $\omega$, we call an open path a path of open
edges of $\omega$. For two vertices $x,y\in V$, we write $x\overset{\omega}{\leftrightarrow}y$
if there exists an open path between $x$ and $y$. When there is
no ambiguity, we will omit the dependence of $\omega$ and write $x\leftrightarrow y$
instead of $x\overset{\omega}{\leftrightarrow}y$. By convention,
for any configuration, a vertex is always connected to itself, i.e.
$x\leftrightarrow x$. We recall our main theorem given in the introduction.
\begin{thm*}
\textbf{$\ref{thm:BBCTheoremePrincipal}$} Let $G=\left(V,E\right)$
be the bunkbed graph of $K_{n}$ with $n\geqslant1$, then for all
vertices of the bottom graph $u,v\in V$ and all $p\geqslant1/2$:
\[
\mathbb{P}_{p}\left(u\leftrightarrow v\right)\geqslant\mathbb{P}_{p}\left(u\leftrightarrow v'\right)
\]
\end{thm*}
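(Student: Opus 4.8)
The plan is to reduce the statement to a counting inequality between two events and then to build a weight-nondecreasing injection between them, the hypothesis $p\geqslant1/2$ being used exactly to turn an increase in the number of open edges into an increase of probability.

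First I would write, after cancelling the common part $\{u\leftrightarrow v\}\cap\{u\leftrightarrow v'\}$,
\[
\mathbb{P}_{p}(u\leftrightarrow v)-\mathbb{P}_{p}(u\leftrightarrow v')=\mathbb{P}_{p}(A)-\mathbb{P}_{p}(B),
\]
where $A=\{u\leftrightarrow v\}\setminus\{u\leftrightarrow v'\}$ and $B=\{u\leftrightarrow v'\}\setminus\{u\leftrightarrow v\}$, so it suffices to prove $\mathbb{P}_{p}(A)\geqslant\mathbb{P}_{p}(B)$. A first useful remark is that on $A\cup B$ the vertical edge of the column of $v$ is necessarily closed: were it open, $v$ and $v'$ would lie in the same cluster and the two connection events would coincide. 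Since for $p\geqslant1/2$ one has $p\geqslant1-p$, the weight $p^{k}(1-p)^{|E|-k}$ of a configuration with $k$ open edges is nondecreasing in $k$; hence it is enough to construct an injection $\phi\colon B\to A$ such that $\phi(\omega)$ has at least as many open edges as $\omega$.

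To organise the construction I would integrate out the vertical edges. Conditioning on the set $W\subseteq\{1,\dots,n\}$ of columns whose vertical edge is open, the two levels become two independent copies of $K_{n}$ glued along the columns of $W$, and the global top--bottom flip $\sigma$ is an automorphism of this glued graph that preserves the conditional law. As $\sigma$ exchanges $v=(b,0)$ with $v'=(b,1)$ while merely swapping the two copies, it gives $\mathbb{P}_{p}(u\leftrightarrow v'\mid W)=\mathbb{P}_{p}(u'\leftrightarrow v\mid W)$. The terms with $b\in W$ contribute equally on both sides, so the theorem reduces to showing, in the glued graph, that reaching the bottom target $(b,0)$ from $(a,0)$ is at least as likely as reaching it from $(a,1)$, i.e. that starting in the same copy as the target is favourable. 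I would then classify a configuration by recording, column by column, whether its bottom vertex, its top vertex, both, or neither lie in the cluster of $u$; I expect this to be the data encoded by the auxiliary family $G_{i,j,k}$, with the two technical lemmas treating one such class at a time. Inside a class the completeness of $K_{n}$ is what gives room to manoeuvre: a path reaching $(b,1)$ can be rerouted into one reaching $(b,0)$ by re-pairing the horizontal edges incident to the columns that the path crosses, and this re-pairing is the injection $\phi$. The first lemma should guarantee that $\phi$ is injective on each class, hence globally, while the second should compare the open-edge counts of $\omega$ and $\phi(\omega)$; it is here, when the rerouting is forced to open a vertical edge, that $p\geqslant1/2$ enters and cannot be dispensed with.

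The step I expect to be the main obstacle is the very definition of $\phi$. Flipping a single column is not an automorphism of the bunkbed graph — it would create diagonal edges between different columns at different levels that do not exist — so $\phi$ cannot be a global symmetry and must be specified configuration by configuration, which makes global injectivity delicate. Worse, there are \emph{bad} bottom configurations, those in which the bottom cluster of $u$ avoids every glued column, for which no term-by-term comparison at fixed $W$ is possible; the injection must therefore mix configurations with different vertical-edge sets, and keeping the open-edge count under control across this mixing is exactly what the completeness of the graph and the restriction $p\geqslant1/2$ are there to make possible.
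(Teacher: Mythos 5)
Your opening reductions are all correct: cancelling the common event $\{u\leftrightarrow v\}\cap\{u\leftrightarrow v'\}$, observing that on the symmetric difference the vertical edge at $v$ is closed, and noting that for $p\geqslant1/2$ the weight $p^{k}(1-p)^{\left|E\right|-k}$ is nondecreasing in $k$, so that an injection $\phi\colon B\to A$ which does not decrease the number of open edges would suffice. But the proof stops exactly where it would have to begin: $\phi$ is never constructed. ``Re-pairing the horizontal edges incident to the columns that the path crosses'' is not a definition --- you do not specify which edges are re-paired, you do not show that the image configuration lands in $A$ (it must not only connect $u$ to $v$ but also \emph{fail} to connect $u$ to $v'$), you give no argument for injectivity, and no control of the open-edge count. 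You flag these points yourself as the main obstacles, and nothing in the proposal overcomes them; what remains is a program, not a proof.

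The gap is essential rather than cosmetic, because the paper proves the theorem by a route that deliberately avoids any configuration-by-configuration map, and its concluding remark explains why your route is hard. The paper classifies configurations by the shape $\left(x,y,z\right)$ of the cluster of $u$ (your guess that the sets $G_{x,y,z}$ encode column-by-column data is close, but they encode only the counts $x$, $y$, $z$, exploiting that all such induced subgraphs of the bunkbed of $K_{n}$ are isomorphic), computes the counts $C_{1}$ and $C_{2}$ exactly by binomial formulas (Propositions \ref{pro:BBCPropositionC1} and \ref{pro:BBCPropositionC2}), proves the purely algebraic identity that $C_{diff}$ sums to zero along each diagonal $x+y=\mathrm{const}$ (Lemma \ref{lem:BBCsommeCDiff}), and uses $p\geqslant1/2$ only to show that the weight $\left(1-p\right)^{B}P$ is monotone along a diagonal in a way aligned with the sign pattern of $C_{diff}$ (Lemma \ref{lem:BBCComparaisonI0}); the inequality follows by comparing every weight with the one at the sign-change index $i_{0}$. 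The sign change of $C_{diff}$ is precisely the obstruction to your plan: in the classes where $C_{diff}<0$ there are strictly more cluster shapes containing $v'$ and not $v$ than shapes containing $v$ and not $v'$, so at $p=1/2$ any edge-count-nondecreasing injection $B\to A$ must move configurations across classes --- the ``mixing'' you acknowledge as delicate --- and the paper explicitly remarks that building such a surjection or injection would be a difficult task for this very reason. Your conditioning on the set $W$ of open vertical edges and the top--bottom flip symmetry are correct observations, but they are not used in, and do not by themselves produce, the missing map $\phi$.
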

\begin{rem}
We chose to study the complete graph since calculations are easier.
Furthermore, a way to solve the conjecture would be to prove that
if the conjecture is verified for a graphe $G$, then it is verified
for the graph $G\backslash\left\{ e,e'\right\} $ where we removed
an edge $e$ and it symmetric $e'$.
\end{rem}

\begin{rem}
Among the trivialities around the conjecture, for any graph $G$,
there exists a constant $p_{G}\in\left(0;1\right]$ such that the
conjecture is verified for all $p\leqslant p_{G}$. Indeed, when the
percolation parameter is small enough, only the shortest paths can
connect two vertices according to $\mathbb{P}_{p}$, the other paths
contributing negligibly. Since the shortest path between $u$ and
$v$ has a shorter length of 1 compared to the shortest path between
$u$ and $v'$, the conjecture is proved.
\end{rem}

As an auxiliary result, we prove that the conjecture holds in mean
in a more general setting, suggesting that the conjecture should be
true. Instead of keeping each edge with probability $p$, we keep
the edge $e$ with probability $p_{e}$, and we define a vector of
percolation parameter $\boldsymbol{p}=\left(p_{e}\right)_{e\in E}$.
In the context of the bunkbed conjecture, we say that the vector of
percolation parameter is constrained if an edge $e=\left\{ x,y\right\} $
has the same percolation parameter as its symmetrical $e'=\left\{ x',y'\right\} $,
\emph{i.e.} for all $e\in E$, $p_{e}=p_{e'}$.
\begin{prop}
\label{prop:BunkbedMoyenne}Let $G$ be a bunkbed graph. Let $X$
and $Y$ be two random variables independent and identically distributed
on the vertices of the bottom graph of $G$ according to a law $P$.
Then for any vector of percolation parameter $\boldsymbol{p}$ constrained,
the following holds:
\[
E\left[\mathbb{P}_{\boldsymbol{p}}\left(X\leftrightarrow Y\right)\right]\geqslant E\left[\mathbb{P}_{\boldsymbol{p}}\left(X\leftrightarrow Y'\right)\right]
\]
\end{prop}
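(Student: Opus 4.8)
The plan is to exploit the top/bottom reflection symmetry to reduce the inequality to a statement that holds for each fixed percolation configuration, where it collapses to a sum of squares. First I would introduce the reflection $\sigma\colon V\to V$ given by $\sigma(x,i)=(x,1-i)$, which swaps the two copies of $\widetilde G$. It sends bottom edges to top edges, top edges to bottom edges, and fixes the vertical edges; because $\boldsymbol p$ is constrained, i.e.\ $p_e=p_{e'}$, the induced map on configurations preserves $\mathbb P_{\boldsymbol p}$. In particular, for any bottom vertices $u,v$ one gets $\mathbb P_{\boldsymbol p}(u\leftrightarrow v)=\mathbb P_{\boldsymbol p}(u'\leftrightarrow v')$ and $\mathbb P_{\boldsymbol p}(u\leftrightarrow v')=\mathbb P_{\boldsymbol p}(u'\leftrightarrow v)$, since $\sigma$ carries the event $\{u\leftrightarrow v'\}$ onto $\{u'\leftrightarrow v\}$.

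Second, I would expand the expectations over $X,Y$ as the finite sums $\sum_{u,v}P(u)P(v)\,(\cdots)$ and use the two identities above to symmetrize. Writing connectivity probabilities as expectations $\mathbb E_{\boldsymbol p}$ of indicators, the difference $E[\mathbb P_{\boldsymbol p}(X\leftrightarrow Y)]-E[\mathbb P_{\boldsymbol p}(X\leftrightarrow Y')]$ equals $\tfrac12\,\mathbb E_{\boldsymbol p}[T(\omega)]$, where
\[
T(\omega)=\sum_{u,v}P(u)P(v)\bigl(\mathbf 1[u\leftrightarrow v]+\mathbf 1[u'\leftrightarrow v']-\mathbf 1[u\leftrightarrow v']-\mathbf 1[u'\leftrightarrow v]\bigr),
\]
all connectivities being taken within the fixed configuration $\omega$. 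Interchanging the finite sum with the expectation is routine, so it suffices to prove $T(\omega)\geqslant0$ for every fixed $\omega$.

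Third, the heart of the argument: I would fix $\omega$, let $\mathcal C$ denote its set of open clusters in $G$, and for each cluster $C$ set $\alpha(C)=\sum_{u:\,u\in C}P(u)$ and $\beta(C)=\sum_{u:\,u'\in C}P(u)$, the $P$-masses of bottom vertices whose bottom, respectively top, copy lies in $C$. Using $\mathbf 1[u\leftrightarrow v]=\sum_{C}\mathbf 1[u\in C]\,\mathbf 1[v\in C]$ and the analogous expansions for the other three indicators, the four sums become $\sum_C\alpha(C)^2$, $\sum_C\beta(C)^2$, $\sum_C\alpha(C)\beta(C)$ and $\sum_C\beta(C)\alpha(C)$, whence $T(\omega)=\sum_{C}\bigl(\alpha(C)-\beta(C)\bigr)^2\geqslant0$, completing the proof.

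The main obstacle here is conceptual rather than computational: a single configuration need not satisfy the one-sided inequality $\sum_C\alpha(C)^2\geqslant\sum_C\alpha(C)\beta(C)$, which can genuinely fail, so the symmetrization by $\sigma$ is \emph{essential}---it is exactly what converts the cross term into $-2\sum_C\alpha(C)\beta(C)$ and yields the perfect square. Once the reflection symmetry is identified, the remainder is bookkeeping. I note that the only hypotheses actually used are that $\boldsymbol p$ is constrained (so that $\sigma$ is measure preserving) and that $X,Y$ are i.i.d.\ on the bottom (so that the double sum factorizes into squares); in particular no special structure of the base graph $\widetilde G$ enters, consistent with the full generality of the statement.
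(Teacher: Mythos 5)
Your proposal is correct and is essentially the paper's own proof: the paper likewise works per configuration $\omega$, decomposes over the open clusters, and writes the symmetrized difference $E\left[\mathds{1}_{X\leftrightarrow Y}+\mathds{1}_{X'\leftrightarrow Y'}\right]-E\left[\mathds{1}_{X\leftrightarrow Y'}+\mathds{1}_{X'\leftrightarrow Y}\right]$ as the sum of squares $\sum_{C}\left[P\left(X\in A\right)-P\left(X\in B\right)\right]^{2}\geqslant0$, where $A$ and $B$ denote the bottom and top parts of the cluster $C$ --- exactly your $\alpha(C)$ and $\beta(C)$. The only presentational difference is that you make explicit the reflection-invariance step (valid precisely because $\boldsymbol{p}$ is constrained, $p_{e}=p_{e'}$) that converts the symmetrized pointwise inequality into the stated one, a step the paper leaves implicit when it calls its pointwise claim a ``slightly stronger result.''
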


Finally, we give a simple upper bound on the difference between $\mathbb{P}_{p}\left(u\leftrightarrow v\right)$
and $\mathbb{P}_{p}\left(u\leftrightarrow v'\right)$.
\begin{prop}
\label{prop:BunkbedUpperBound}For any bunked graph $G$ and any vector
of percolation parameter $\boldsymbol{p}$, 
\begin{equation}
\left|\mathbb{P}_{\boldsymbol{p}}\left(u\leftrightarrow v\right)-\mathbb{P}_{\boldsymbol{p}}\left(u\leftrightarrow v'\right)\right|\leqslant\mathbb{P}_{\boldsymbol{p}}\left(u\not\leftrightarrow u'\cap v\not\leftrightarrow v'\right)\label{eq:upperbound}
\end{equation}
\end{prop}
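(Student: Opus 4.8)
The plan is to localise the difference of the two probabilities on the event where neither endpoint reaches its symmetric copy, and then to use the top--bottom reflection of the bunkbed to cancel the contributions coming from configurations in which $u$ does reach $u'$. Throughout write $A=\{u\leftrightarrow v\}$ and $B=\{u\leftrightarrow v'\}$, and let $D=\{u\not\leftrightarrow u'\}\cap\{v\not\leftrightarrow v'\}$ be the target event. The first observation I would record is elementary: on $\{v\leftrightarrow v'\}$ the vertices $v$ and $v'$ lie in a common cluster, so $u\leftrightarrow v$ holds if and only if $u\leftrightarrow v'$ holds, and the two events coincide there. Using $\mathbb{P}_{\boldsymbol p}(A)-\mathbb{P}_{\boldsymbol p}(B)=\mathbb{P}_{\boldsymbol p}(A\setminus B)-\mathbb{P}_{\boldsymbol p}(B\setminus A)$, I would then check the inclusions $A\setminus B\subseteq\{v\not\leftrightarrow v'\}$ and $B\setminus A\subseteq\{v\not\leftrightarrow v'\}$: indeed, if $u\leftrightarrow v$ and $v\leftrightarrow v'$ then $u\leftrightarrow v'$, and symmetrically. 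Since $A\setminus B$ and $B\setminus A$ are disjoint subsets of $\{v\not\leftrightarrow v'\}$, this already gives the weaker bound $\left|\mathbb{P}_{\boldsymbol p}(A)-\mathbb{P}_{\boldsymbol p}(B)\right|\leqslant\mathbb{P}_{\boldsymbol p}(v\not\leftrightarrow v')$.

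To sharpen $\{v\not\leftrightarrow v'\}$ into $D$, I would split each of $A\setminus B$ and $B\setminus A$ according to whether $u\leftrightarrow u'$. The parts on $\{u\not\leftrightarrow u'\}$ lie inside $D$ (because $A\setminus B,B\setminus A\subseteq\{v\not\leftrightarrow v'\}$), so only the parts on $\{u\leftrightarrow u'\}$ remain to be understood. For these I would introduce the reflection $\sigma$ exchanging every edge with its symmetric copy, i.e. the map sending $\omega$ to $(\omega_{e'})_{e\in E}$; a path in $\sigma\omega$ between two vertices corresponds to the reflected path in $\omega$, so $p\overset{\sigma\omega}{\leftrightarrow}q$ if and only if $p'\overset{\omega}{\leftrightarrow}q'$. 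Since $\sigma$ fixes the event $\{u\leftrightarrow u'\}$ setwise, and on that event the endpoints $u$ and $u'$ are interchangeable, a direct verification shows $\sigma$ carries $(A\setminus B)\cap\{u\leftrightarrow u'\}$ bijectively onto $(B\setminus A)\cap\{u\leftrightarrow u'\}$. If $\sigma$ preserves $\mathbb{P}_{\boldsymbol p}$, these two contributions have equal probability and cancel, leaving
\[
\mathbb{P}_{\boldsymbol p}(A)-\mathbb{P}_{\boldsymbol p}(B)=\mathbb{P}_{\boldsymbol p}\bigl((A\setminus B)\cap D\bigr)-\mathbb{P}_{\boldsymbol p}\bigl((B\setminus A)\cap D\bigr).
\]
As the two sets on the right are disjoint subsets of $D$, the absolute value of the right-hand side is at most $\mathbb{P}_{\boldsymbol p}(D)$, which is exactly \eqref{eq:upperbound}.

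The delicate point, and the one I would treat most carefully, is the measure-invariance of the reflection $\sigma$ used in the cancellation: $\sigma$ preserves $\mathbb{P}_{\boldsymbol p}$ precisely when an edge and its symmetric copy carry the same parameter, that is, when $\boldsymbol p$ is constrained (in particular for the uniform parameter underlying Theorem~\ref{thm:BBCTheoremePrincipal}). Without such symmetry the two $\{u\leftrightarrow u'\}$ contributions need not be equal and only the weaker bound of the first paragraph survives; so the heart of the argument is to justify that, under the relevant reflection symmetry of the parameters, the combinatorial bijection above is genuinely probability-preserving. Once that is secured, the remaining verifications — the set inclusions and the interchange of $v$ and $v'$ on $\{u\leftrightarrow u'\}$ — are routine.
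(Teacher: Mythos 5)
Your argument is correct and is essentially the paper's own proof: the paper decomposes both probabilities according to the events $\{v\leftrightarrow v'\}$ and $\{u\leftrightarrow u'\}$, observes that the contributions on $\{v\leftrightarrow v'\}$ coincide, declares the contributions on $\{v\not\leftrightarrow v'\}\cap\{u\leftrightarrow u'\}$ equal ``by an argument of symmetry'', and bounds the remaining terms, supported on $D=\{u\not\leftrightarrow u'\}\cap\{v\not\leftrightarrow v'\}$, by $\mathbb{P}_{\boldsymbol p}(D)$. Your reflection $\sigma$ is exactly the symmetry the paper invokes, and your disjointness bound at the end replaces its equivalent conditioning step.

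One thing should be said explicitly: the caveat in your last paragraph is not excess caution but identifies a genuine defect in the paper's statement. The proposition is asserted for \emph{any} vector $\boldsymbol p$, yet the symmetry step (yours and the paper's alike) needs $\boldsymbol p$ constrained, i.e. $p_e=p_{e'}$, and without that hypothesis the inequality is simply false. Indeed, take the bunkbed of a single edge $\{x,y\}$, with $u=(x,0)$, $v=(y,0)$, and assign parameter $1$ to the bottom edge $\{u,v\}$, parameter $0$ to the top edge $\{u',v'\}$ and to the vertical edge $\{v,v'\}$, and parameter $1/2$ to the vertical edge $\{u,u'\}$. Then $u\leftrightarrow v$ almost surely while $v'$ is almost surely isolated, so $\mathbb{P}_{\boldsymbol p}\left(u\leftrightarrow v\right)-\mathbb{P}_{\boldsymbol p}\left(u\leftrightarrow v'\right)=1$, whereas $\{v\not\leftrightarrow v'\}$ is almost sure and $u\leftrightarrow u'$ holds exactly when the edge $\{u,u'\}$ is open, giving $\mathbb{P}_{\boldsymbol p}\left(u\not\leftrightarrow u'\cap v\not\leftrightarrow v'\right)=1/2<1$. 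So the correct statement is the one you actually prove: the bound $\eqref{eq:upperbound}$ holds for constrained $\boldsymbol p$ (in particular for constant $p$, which is all the paper uses), and in general only the weaker bound by $\mathbb{P}_{\boldsymbol p}\left(v\not\leftrightarrow v'\right)$ survives; the paper's ``argument of symmetry'' silently uses the constraint.
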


In the case of the bunkbed graph of $K_{n}$ when the vector of percolation
is constant, $\forall e,p_{e}=p$, note that $\eqref{eq:upperbound}$
is bounded by $\mathbb{P}_{p}\left(u\not\leftrightarrow u'\right)\leqslant\left(1-p\right)\left(1-p^{3}\right)^{n-1}$.
From now on, we fix $n\in\mathbb{N}$ and $G$ will denote the bunkbed
graph of $K_{n}$.

\section{Proof of Theorem $\ref{thm:BBCTheoremePrincipal}$}

In this section, we prove the main theorem in the following manner. 

First we fix a vertex $u$ on the bottom graph. We define $G_{x,y,z}$
the set of the induced subgraphs of $G$ containing $u$ with $x$
vertices on the bottom graph, $y$ vertices on the top graph, and
$z$ vertices of the bottom graph having their symmetrical in the
subgraph. From this definition $x\geqslant1$, since $u$ is always
in the subgraphs, and $z\leqslant\min\left(x,y\right)$. An example
of these graphs is given in the figure $\ref{fig:BBCFigureQuiDechire}$.

Secondly, we fix a vertex $v$ on the bottom graph. We define $G^{1}$
the set of induced subgraph of $G$ containing $v$ and $G^{2}$ the
set of induced subgraph containing $v'$. 

From these three definitions, we introduce the sets $G_{x,y,z}^{1}=G^{1}\cap G_{x,y,z}$
and $G_{x,y,z}^{2}=G^{2}\cap G_{x,y,z}$. Noting $\left|X\right|$
the cardinality of the set $X$, we introduce the functions $C_{1}:\mathbb{N}^{3}\to\mathbb{N}$,
$C_{2}:\mathbb{N}^{3}\to\mathbb{N}$ and $C_{diff}:\mathbb{N}^{3}\to\mathbb{N}$
defined by:
\begin{align*}
C_{1}\left(x,y,z\right) & =\left|G_{x,y,z}^{1}\right|\\
C_{2}\left(x,y,z\right) & =\left|G_{x,y,z}^{2}\right|\\
C_{diff}\left(x,y,z\right) & =\begin{cases}
C_{1}\left(x,y,z\right)-C_{2}\left(x,y,z\right)\\
\quad+C_{1}\left(y,x,z\right)-C_{2}\left(y,x,z\right) & \mbox{ if }x\neq y\\
C_{1}\left(x,x,z\right)-C_{2}\left(x,x,z\right) & \mbox{ if }x=y
\end{cases}
\end{align*}

Note that, since we considered the bunkbed of a complete graph, any
graph of $G_{x,y,z}$ is isomorph to any other graph of $G_{x,y,z}$
, so we introduce the function $P\colon\mathbb{N}^{3}\to\mathbb{N}$
which associates to a triplet $\left(x,y,z\right)$ the probability
for a graph of $G_{x,y,z}$ to be connected. 

We call the main component of a configuration $\omega$, the set of
vertices connected by an open path to $u$. Since the main component
is a spanning subgraph of $G'\in G_{x,y,z}$, it has to be connected
and isolated from the rest; thus we introduce the number edges that
have to be closed on the boundary of the main component with the function
$B:\mathbb{N}^{3}\to\mathbb{N}$. 

Note that $C_{diff}$, $P$ and $B$ are symmetrical in their two
first coordinates unlike the functions $C_{1}$ and $C_{2}$.

If the main component contains $v$ (resp. $v'$), then there exist
$x,y,z$ such that it is a spanning subgraph of a $G'\in G_{x,y,z}^{1}$
(resp. $G'\in G_{x,y,z}^{2}$). To quantify $\mathbb{P}_{p}\left(u\leftrightarrow v\right)$,
we will classify the configurations $\omega$ according to which set
$G_{x,y,z}^{1}$ (resp. $G_{x,y,z}^{2}$) the main component is a
spanning subgraph. 

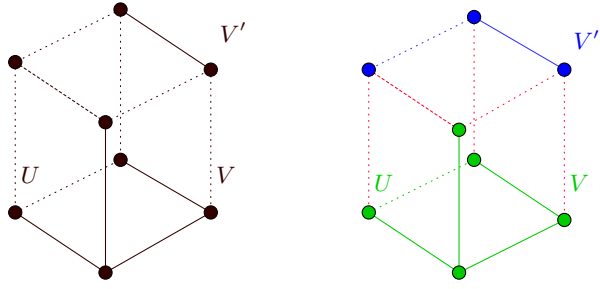
\begin{figure}[!h]
\definecolor{ttqqqq}{rgb}{0.2,0.,0.}
\definecolor{ttttff}{rgb}{0.2,0.2,1.} 
\definecolor{ffqqtt}{rgb}{1.,0.,0.2} 
\definecolor{qqqqff}{rgb}{0.,0.,1.} 
\definecolor{qqccqq}{rgb}{0.,0.8,0.} 

\begin{center}
\begin{tikzpicture}[line cap=round,line join=round,>=triangle 45,x=1.0cm,y=1.0cm] \clip(0.4,0.3) rectangle (10,6); 
\draw [dotted,color=ffqqtt] (5.9,3.6)-- (5.9,1.7); 
\draw [color=qqccqq] (7.1,2.8)-- (7.1,0.9); 
\draw [dotted,color=ffqqtt] (8.5,3.6)-- (8.5,1.6); 
\draw [dotted,color=ffqqtt] (7.3,4.3)-- (7.3,2.4); 
\draw [dotted,color=ttttff] (7.3,4.3)-- (5.9,3.6);
\draw [dash pattern=on 1pt off 1pt,color=ffqqtt] (7.1,2.8)-- (5.9,3.6); 
\draw [color=ttttff] (7.3,4.3)-- (8.5,3.6); 
\draw [dotted,color=ffqqtt] (7.1,2.8)-- (8.5,3.6); 
\draw [color=qqccqq] (8.5,1.6)-- (7.1,0.9); 
\draw [color=qqccqq] (7.1,0.9)-- (5.9,1.7); 
\draw [dotted,color=qqccqq] (5.9,1.7)-- (7.3,2.4); 
\draw [color=qqccqq] (7.3,2.4)-- (8.5,1.6);
\draw [dotted,color=ttqqqq] (1.2,3.7)-- (1.2,1.7); 
\draw [color=ttqqqq] (2.4,2.9)-- (2.4,0.9); 
\draw [dotted,color=ttqqqq] (3.8,3.6)-- (3.8,1.7); 
\draw [dotted,color=ttqqqq] (2.6,4.4)-- (2.6,2.4); 
\draw [dotted,color=ttqqqq] (2.6,4.4)-- (1.2,3.7); 
\draw [dash pattern=on 1pt off 1pt,color=ttqqqq] (2.4,2.9)-- (1.2,3.7); 
\draw [color=ttqqqq] (2.6,4.4)-- (3.8,3.6); 
\draw [dotted,color=ttqqqq] (2.4,2.9)-- (3.8,3.6); 
\draw [color=ttqqqq] (3.8,1.7)-- (2.4,0.9); 
\draw [color=ttqqqq] (2.4,0.9)-- (1.2,1.7); 
\draw [dotted,color=ttqqqq] (1.2,1.7)-- (2.6,2.4); 
\draw [color=ttqqqq] (2.6,2.4)-- (3.8,1.7);

\begin{scriptsize} 
	\draw [fill=qqccqq] (5.9,1.7) circle (2.5pt); 
	\draw[color=qqccqq] (6.1,2.1) node {$U$}; 
	\draw [fill=qqccqq] (7.1,0.9) circle (2.5pt); 
	\draw [fill=qqccqq] (8.5,1.6) circle (2.5pt); 
	\draw[color=qqccqq] (8.7,2.1) node {$V$}; 
	\draw [fill=qqccqq] (7.3,2.4) circle (2.5pt); 
	\draw [fill=qqqqff] (5.9,3.6) circle (2.5pt); 
	\draw [fill=qqccqq] (7.1,2.8) circle (2.5pt); 
	\draw [fill=qqqqff] (8.5,3.6) circle (2.5pt);
	\draw[color=qqqqff] (8.8,4.0) node {$V'$}; 
	\draw [fill=qqqqff] (7.3,4.3) circle (2.5pt); 
	\draw [fill=ttqqqq] (1.2,1.7) circle (2.5pt); 
	\draw[color=ttqqqq] (1.4,2.2) node {$U$}; 
	\draw [fill=ttqqqq] (2.4,0.9) circle (2.5pt); 
	\draw [fill=ttqqqq] (3.8,1.7) circle (2.5pt); 
	\draw[color=ttqqqq] (4.0,2.2) node {$V$}; 
	\draw [fill=ttqqqq] (2.6,2.4) circle (2.5pt); 
	\draw [fill=ttqqqq] (1.2,3.7) circle (2.5pt); 
	\draw [fill=ttqqqq] (2.4,2.9) circle (2.5pt); 
	\draw [fill=ttqqqq] (3.8,3.6) circle (2.5pt); 
	\draw[color=ttqqqq] (4.1,4.1) node {$V'$}; 
	\draw [fill=ttqqqq] (2.6,4.4) circle (2.5pt);
\end{scriptsize} 
\end{tikzpicture}
\end{center}

\caption{Decomposition of a configuration}

\label{fig:BBCFigureDeDecomposition}
\end{figure}

We illustrate these quantities with the figure $\ref{fig:BBCFigureDeDecomposition}$
where, for the sake of simplicity, we draw the bunkbed graph of a
square. Full edges represent open edges and dotted lines the closed
ones. Green vertices and vertices belong to the main component. Red
edges are the exterior edges that have to be closed. Blue vertices
and edges are the remaining components of the graph. 

Recall that we introduced these functions to decompose the quantities
$\mathbb{P}_{p}\left(u\leftrightarrow v\right)$ and $\mathbb{P}_{p}\left(u\leftrightarrow v'\right)$.
The configurations $\omega$ have as their main components a connected
spanning subgraph of $G'\in G_{x,y,z}$, which are for fixed $x,y,z$
isomorph to each other and whose probability to be connected is $P\left(x,y,z\right)$,
and a number $B\left(x,y,z\right)$ of closed boundary edges. Writing
$MC(\omega)$ the main component of $\omega$, we obtain:
\begin{eqnarray*}
\mathbb{P}_{p}\left(u\leftrightarrow v\right) & = & \sum_{x,y,z}\sum_{\substack{G'=\left(V',E'\right)\in G_{x,y,z}^{1}}
}\sum_{\omega}\mathbb{P}\left(MC\left(\omega\right)=V'\right)\\
 & = & \sum_{x,y,z}\sum_{\substack{G'=\left(V',E'\right)\in G_{x,y,z}^{1}}
}\mathbb{P}\left(G'\text{ is connected}\right)\times\left(1-p\right)^{B\left(x,y,z\right)}\\
 & = & \sum_{x,y,z}C_{1}\left(x,y,z\right)\times P\left(x,y,z\right)\times\left(1-p\right)^{B\left(x,y,z\right)}
\end{eqnarray*}
Likewise, it holds: 
\[
\mathbb{P}_{p}\left(u\leftrightarrow v'\right)=\sum_{x,y,z}C_{2}\left(x,y,z\right)\times P\left(x,y,z\right)\times\left(1-p\right)^{B\left(x,y,z\right)}
\]
Taking the difference between these two last quantities, and reindexing,
we obtain:
\begin{eqnarray}
 &  & \mathbb{P}_{p}\left(u\leftrightarrow v\right)-\mathbb{P}_{p}\left(u\leftrightarrow v'\right)\nonumber \\
 & = & \sum_{x,y,z}\left(P\times\left(1-p\right)^{B}\times\left(C_{1}-C_{2}\right)\right)\left(x,y,z\right)\nonumber \\
 & = & \sum_{z\geqslant0}\sum_{x\geqslant y}\left(P\times\left(1-p\right)^{B}\times C_{diff}\right)\left(x,y,z\right)\nonumber \\
 & = & \sum_{z\geqslant0}\sum_{k\geqslant0}\sum_{i\in\mathbb{N}}\left(P\times\left(1-p\right)^{B}\times C_{diff}\right)\left(k+i,k-i,z\right)\nonumber \\
 &  & \quad+\sum_{z\geqslant0}\sum_{k\geqslant0}\sum_{i\in\mathbb{N}}\left(P\times\left(1-p\right)^{B}\times C_{diff}\right)\left(k+1+i,k-i,z\right)\label{eq:BBCPreuveThmPrincipal}
\end{eqnarray}
The proof of the theorem relies on the two key lemmas:
\begin{lem}
\label{lem:BBCComparaisonI0} Let $p\geqslant1/2$. $\forall\epsilon\in\left\{ 0;1\right\} $
there exists $i_{0}:=i_{0}\left(n\right)$ such that $\forall i<i_{0}$,
we have:\textup{
\begin{eqnarray*}
\left(\left(1-p\right)^{B}\times P\right)\left(k+i+\epsilon,k-i,z\right) & \leqslant & \left(\left(1-p\right)\times P\right)\left(k+i_{0}+\epsilon,k-i_{0},z\right)\\
C_{diff}\left(k+i+\epsilon,k-i,z\right) & < & 0
\end{eqnarray*}
and for all $i\geqslant i_{0}$: 
\begin{eqnarray*}
\left(\left(1-p\right)^{B}\times P\right)\left(k+i+\epsilon,k-i,z\right) & \geqslant & \left(\left(1-p\right)^{B}\times P\right)\left(k+i_{0}+\epsilon,k-i_{0},z\right)\\
C_{diff}\left(k+i+\epsilon,k-i,z\right) & \geqslant & 0
\end{eqnarray*}
}
\end{lem}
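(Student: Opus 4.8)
The plan is to make the three combinatorial quantities explicit and then reduce the whole lemma to two independent monotonicity facts: a sign statement for $C_{diff}$ and a monotonicity statement for the weight $(1-p)^{B}\times P$.

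First I would compute the cardinalities by encoding each induced subgraph of $G$ containing $u$ as an assignment, to each of the $n$ columns, of a state in $\{\text{empty},\text{bottom only},\text{top only},\text{full}\}$, with $x-z$ bottom-only, $y-z$ top-only and $z$ full columns. Writing $T(x,y,z)$ for the multinomial count of such assignments, a short computation (conditioning on the states of the two distinguished columns carrying $u$ and $v$) gives $C_1(x,y,z)=T\,\frac{x(x-1)}{n(n-1)}$ and $C_2(x,y,z)=T\,\frac{xy-z}{n(n-1)}$, whence $(C_1-C_2)(x,y,z)=\frac{T}{n(n-1)}\bigl(x(x-1-y)+z\bigr)$. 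Symmetrising in the first two arguments (using $T(x,y,z)=T(y,x,z)$) yields, for $x\neq y$, $C_{diff}(x,y,z)=\frac{T}{n(n-1)}\bigl((x-y)^2-(x+y)+2z\bigr)$, and exactly half of this when $x=y$. Substituting $x=k+i+\epsilon$, $y=k-i$ shows that the sign of $C_{diff}(k+i+\epsilon,k-i,z)$ equals the sign of $g(i):=(2i+\epsilon)^2-(2k+\epsilon)+2z$, which is strictly increasing in $i\geqslant0$. Defining $i_0$ to be the least $i$ with $g(i)\geqslant0$ then gives $C_{diff}<0$ for $i<i_0$ and $C_{diff}\geqslant0$ for $i\geqslant i_0$, which is the second inequality in each regime.

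For the weight I would count the boundary of the main component directly: the edges leaving a subgraph with profile $(x,y,z)$ are the $x(n-x)$ bottom edges, the $y(n-y)$ top edges, and the $x+y-2z$ vertical edges with exactly one endpoint inside, so $B(x,y,z)=x(n-x)+y(n-y)+x+y-2z$. Along the step $(x,y)\mapsto(x+1,y-1)$ (that is $i\mapsto i+1$) one finds $B(x+1,y-1,z)-B(x,y,z)=-2(x-y+1)\leqslant-2$, so $(1-p)^{B}$ is multiplied by $(1-p)^{-2(x-y+1)}\geqslant(1-p)^{-2}\geqslant4$, using $p\geqslant1/2$. It therefore suffices to prove the connectivity comparison $P(x+1,y-1,z)\geqslant(1-p)^{2(x-y+1)}P(x,y,z)$, for then the product $W(i):=\bigl((1-p)^{B}\times P\bigr)(k+i+\epsilon,k-i,z)$ is non-decreasing in $i$; monotonicity immediately gives $W(i)\leqslant W(i_0)$ for $i<i_0$ and $W(i)\geqslant W(i_0)$ for $i\geqslant i_0$, which is the first inequality in each regime, for the very same threshold $i_0$ chosen above.

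The main obstacle is precisely this connectivity comparison. Here $P(x,y,z)$ is the probability that a bottom clique $K_x$ and a top clique $K_y$, joined by a matching of $z$ vertical edges, percolate into a single component, and rebalancing the clique sizes could a priori decrease it. I would attack it by a coupling that reflects a single column from top-only to bottom-only while keeping $z$ and all other columns fixed, arguing that enlarging the bottom clique never costs more connectivity than is gained, the admissible loss being exactly the factor $(1-p)^{2(x-y+1)}$ permitted thanks to $p\geqslant1/2$; an alternative is to set up a recursion for $P$ in terms of the clique-connectivity probabilities $h(m)=\mathbb{P}(K_m\text{ connected})$ and the number of open vertical edges, and compare the two sides term by term. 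Establishing this inequality for all admissible $(x,y,z)$ with $z\leqslant y\leqslant x\leqslant n$ is the crux; the sign statement for $C_{diff}$ and the boundary count are then routine.
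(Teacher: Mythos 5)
Your combinatorial work is correct and coincides with the paper's: your column-state count reproduces exactly the paper's formulas for $C_{1}$ and $C_{2}$ (Propositions~\ref{pro:BBCPropositionC1} and \ref{pro:BBCPropositionC2}), your symmetrised expression shows that the sign of $C_{diff}\left(k+i+\epsilon,k-i,z\right)$ is that of $\left(2i+\epsilon\right)^{2}-\left(2k+\epsilon\right)+2z$, increasing in $i$, which is the content of Lemma~\ref{lem:BBCNegativiteCDiff}; and your boundary count $B\left(x,y,z\right)=x\left(n-x\right)+y\left(n-y\right)+x+y-2z$ with step $B\left(x+1,y-1,z\right)-B\left(x,y,z\right)=-2\left(x-y+1\right)\leqslant-2$ is precisely Lemma~\ref{lem:BBCComparaisonE}. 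Framing the first pair of inequalities as monotonicity of $W\left(i\right)=\left(\left(1-p\right)^{B}\times P\right)\left(k+i+\epsilon,k-i,z\right)$ with the same threshold $i_{0}$ is also sound.

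However, there is a genuine gap: you never prove the connectivity comparison $P\left(x+1,y-1,z\right)\geqslant\left(1-p\right)^{2\left(x-y+1\right)}P\left(x,y,z\right)$, which you yourself identify as the crux. This is the only step of the lemma that is not routine counting, and the only place where $p\geqslant1/2$ enters in an essential way; the two routes you sketch (a column-reflection coupling, or a recursion in clique-connectivity probabilities) are left entirely speculative, and the coupling route is doubtful as stated, since moving a vertex from the top clique to the bottom clique does not improve connectivity configuration-by-configuration. The paper closes exactly this step (Lemma~\ref{lem:BBCComparaisonGC}) with a short sandwich argument: for $y\geqslant1$ and $z\geqslant1$, connectivity forces at least one vertical edge open, so $P\left(x,y,z\right)\leqslant1-\left(1-p\right)^{z}$, while requiring both cliques connected and one vertical edge open (independent events) gives $P\left(x,y,z\right)\geqslant P\left(x,0,0\right)P\left(0,y,0\right)\left(1-\left(1-p\right)^{z}\right)$; hence the ratio of the two sides is at least a product of two clique-connectivity probabilities. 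It then remains to show $\mathbb{P}_{p}\left(K_{m}\text{ connected}\right)\geqslant1/2$ for $p\geqslant1/2$, which the paper gets from $P\left(m,0,0\right)\geqslant2-\left(1+\left(1-p\right)^{m/2}\right)^{m}\geqslant0.6$ for $m\geqslant10$ and from direct enumeration (OEIS A001187) for $m\leqslant10$, yielding a ratio at least $1/4\geqslant\left(1-p\right)^{2}$ — which is even stronger than the bound $\left(1-p\right)^{2\left(x-y+1\right)}$ you need. Without some such argument, your proposal establishes only the $C_{diff}$ half of the lemma.
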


\begin{rem}
One may ask if the lower bounds $1/2$ is optimal. Coarse estimates
on the function $P$ have been used to prove this lemma to get a simplified
proof. One can using better estimates and lower the bound. The author
has obtained a bound of $0.42$ using computer calculation.
\end{rem}

\begin{lem}
\label{lem:BBCsommeCDiff} For all $z>0$ and for all $k\geqslant z$,
we have: 
\[
\sum_{i=0}^{k-z}C_{diff}\left(k+i,k-i,z\right)=\sum_{i=0}^{k-z}C_{diff}\left(k+i+1,k-i,z\right)=0
\]
\end{lem}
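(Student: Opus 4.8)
The plan is to turn the alternating sum of $C_{diff}$ into a genuinely telescoping sum of binomial coefficients. The first step is to obtain closed forms for $C_1$ and $C_2$ by directly counting induced subgraphs of the bunkbed of $K_n$. Fix the bottom vertices $u=(u_0,0)$ and $v=(v_0,0)$ with $u_0\neq v_0$; an element of $G_{x,y,z}$ is a pair $(A,B)$ of subsets of $\widetilde V$ (the bottom, resp. top, vertices present) with $|A|=x$, $|B|=y$, $|A\cap B|=z$ and $u_0\in A$. Counting those with $v_0\in A$ gives
\[
C_1(x,y,z)=\binom{n-2}{x-2}\binom{x}{z}\binom{n-x}{y-z},
\]
while counting those with $v_0\in B$, split according to whether $v_0\in A$, yields
\[
C_2(x,y,z)=\binom{n-2}{x-2}\binom{x-1}{z-1}\binom{n-x}{y-z}+\binom{n-2}{x-1}\binom{x}{z}\binom{n-x-1}{y-z-1}.
\]

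Second, I would simplify the difference. Pascal's identity $\binom{x}{z}-\binom{x-1}{z-1}=\binom{x-1}{z}$ collapses the first pair of factors, giving $(C_1-C_2)(x,y,z)=T_1(x,y)-T_2(x,y)$, where $T_1(x,y)=\binom{n-2}{x-2}\binom{x-1}{z}\binom{n-x}{y-z}$ and $T_2(x,y)=\binom{n-2}{x-1}\binom{x}{z}\binom{n-x-1}{y-z-1}$. Next I recombine: by the very definition of $C_{diff}$, summing $C_{diff}$ over the pairs with $x\geqslant y$ and $x+y$ fixed is the same as summing $C_1-C_2$ over \emph{all} ordered pairs with that sum, so with $N:=x+y$ held fixed both quantities in the statement equal
\[
\sum_{x=z}^{N-z}(C_1-C_2)(x,N-x,z),
\]
with $N=2k$ for the first sum and $N=2k+1$ for the second.

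Third comes the telescoping. Writing $T_1$ and $T_2$ as functions of $x$ along the line $x+y=N$, a one-line check gives the shift identity $T_2(x)=T_1(x+1)$, since both equal $\binom{n-2}{x-1}\binom{x}{z}\binom{n-x-1}{N-x-z-1}$. Hence the summand is $T_1(x)-T_1(x+1)$ and the sum collapses to the boundary value $T_1(z)-T_1(N-z+1)$. Here the hypothesis $z>0$ enters: $T_1(z)$ carries the factor $\binom{z-1}{z}=0$, and $T_1(N-z+1)$ carries the factor $\binom{n-x}{y-z}$ with $y-z=(N-x)-z=-1$, hence $=0$. Both endpoints vanish and the sum is $0$, as claimed.

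The conceptual reason behind the telescoping—and a good sanity check—is the involution on induced subgraphs that swaps the memberships of $(v_0,0)$ and $(v_0,1)$: it preserves the constraint $u_0\in A$, the total size $x+y$ and the overlap $z$, and it bijects the subgraphs containing $v$ on the bottom with those containing $v'$ on the top, which is precisely the identity $\sum C_1=\sum C_2$ established algebraically above. I expect the only delicate points to be the two‑case count producing $C_2$ (making sure the $v_0\in A$ and $v_0\notin A$ contributions draw from the correct remaining pools of sizes $n-2$, $n-x$ and $n-x-1$) and the verification that the two telescoping endpoints fall outside the support of the binomials; the shift identity $T_2(x)=T_1(x+1)$ itself is immediate once the closed forms are in hand.
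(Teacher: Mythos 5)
Your proof is correct, and although it starts from the same counting input as the paper, the mechanism by which the sums vanish is genuinely different. Your two-case count of $C_2$ is the paper's four-case count (Proposition \ref{pro:BBCPropositionC2}) after Pascal's identity merges the cases pairwise, your binomial closed forms agree with Propositions \ref{pro:BBCPropositionC1} and \ref{pro:BBCPropositionC2}, and your reindexing of both sums as $\sum_{x=z}^{N-z}(C_1-C_2)(x,N-x,z)$ with $N=2k$, resp.\ $N=2k+1$, is a faithful unfolding of the definition of $C_{diff}$. From there, however, the paper keeps the symmetrized quantity: it writes $C_{diff}(k+i,k-i,z)$ in factorial form \eqref{eq:BBCValeurDeCDiff} and proves the two identities separately (Lemmas \ref{lem:BBCSommeCDiffNulle_1} and \ref{lem:BBCSommeCDiffNulle_2}) by exhibiting an ad hoc telescoping function, $g(i)=(2i-1)(k+i-z)/\bigl((k+i-z)!(k-i-z)!\bigr)$ and its analogue for the odd case, whose differences reproduce the summand; these identities have to be guessed and then verified, with the terms $i=1$ and $i=k-z$ handled by hand. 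You instead telescope the \emph{unsymmetrized} sum: Pascal's identity gives $(C_1-C_2)(x,N-x,z)=T_1(x)-T_1(x+1)$ with $T_1(x)=\binom{n-2}{x-2}\binom{x-1}{z}\binom{n-x}{N-x-z}$, so both parities are treated uniformly and the hypothesis $z>0$ enters exactly once, in the vanishing of the endpoint $T_1(z)$. Moreover, your closing ``sanity check'' is in fact a complete proof on its own: the involution swapping the memberships of $(v_0,0)$ and $(v_0,1)$ preserves the presence of $u$, the total $x+y$ and the overlap $z$, fixes the subgraphs containing both $v$ and $v'$, and exchanges the remaining elements of $G^{1}$ and $G^{2}$ within each class, which yields $\sum_{x+y=N}C_1=\sum_{x+y=N}C_2$ with no algebra at all. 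What the paper's route buys is the explicit formula \eqref{eq:BBCValeurDeCDiff}, which it reuses in Lemma \ref{lem:BBCNegativiteCDiff}; what your route buys is a shorter, uniform, and conceptually transparent argument that explains \emph{why} the sums vanish.
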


Once these lemmas stated, we can end the proof of the theorem. We
start by noting that using lemma $\ref{lem:BBCComparaisonI0}$, when
$i>i_{\text{0}}$, the quantity $C_{diff}$ is positive, and we can
lower bound $\left(\left(1-p\right)^{B}\times P\right)\left(k+i,k-i,z\right)$
by $\left(\left(1-p\right)^{B}\times P\right)\left(k+i_{0},k-i_{0},z\right)$,
whereas when $i<i_{0}$, the quantity $C_{diff}$ is negative, we
can upper bound $\left(\left(1-p\right)^{B}\times P\right)\left(k+i,k-i,z\right)$
by $\left(\left(1-p\right)^{B}\times P\right)\left(k+i_{0},k-i_{0},z\right)$.
Finally, when $z=0$, meaning that none of the vertices are connected
to their symmetrical, no configuration connect $u$ to $v'$. In this
way, we get:
\begin{eqnarray*}
 &  & \left(\ref{eq:BBCPreuveThmPrincipal}\right)\\
 & \geqslant & \sum_{z>0}\left(\left(1-p\right)^{B}P\right)\left(k+i_{0},k-i_{0},z\right)\sum_{i=0}^{k-z}C_{diff}\left(k+i,k-i,z\right)\\
 &  & \quad+\sum_{z>0}\left(\left(1-p\right)^{B}P\right)\left(k+i_{0}+1,k-i_{0},z\right)\sum_{i=0}^{k-z}C_{diff}\left(k+i+1,k-i,z\right)\\
 &  & =0
\end{eqnarray*}
Where the last inequality is obtained using lemma $\ref{lem:BBCsommeCDiff}$
concluding the proof of the main theorem.

\begin{flushright} $\square$ \end{flushright}

\section{Proof of the Technical Lemmas}

We start by proving the two technical lemmas used in the proof of
the main theorem. Recall that we considered the bunkbed of the complete
graph with $n$ vertices. We start by proving lemma $\ref{lem:BBCComparaisonI0}$
using three intermediates lemmas. 
\begin{lem}
\label{lem:BBCComparaisonE} Let $x,x',y,y',z\in\left[0;n\right]\cap\mathbb{N}$
such that $x+y=x'+y'$ and $z\leqslant\min\left(x,x',y,y'\right)$.
If $\left|x-y\right|>\left|x'-y'\right|$ then $B\left(x,y,z\right)\geqslant B\left(x',y',z\right)+2$
\end{lem}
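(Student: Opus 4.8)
The plan is to derive an explicit formula for $B$ and then reduce the inequality to an elementary estimate on squares. First I would count the edges in the boundary of the main component. The component occupies a set $S$ consisting of $x$ vertices in the bottom copy of $K_n$ and $y$ vertices in the top copy, containing exactly $z$ complete rungs; isolating $S$ forces every edge of the cut $\partial S$ to be closed. In the bunkbed of $K_n$ this cut consists of the horizontal edges joining $S$ to the unused vertices of each copy together with the rungs having a single endpoint in $S$. Counting these contributions and collecting terms, one obtains a formula of the shape
\[
B(x,y,z) = \Psi(s,z) + \tfrac12 (x-y)^2 ,
\]
where $s := x+y$ and $\Psi(s,z)$ gathers all the terms depending only on the total size $s$ and on the number of rungs $z$; in particular the whole dependence on the way the $s$ vertices are distributed between the two copies is carried by the single term $\tfrac12(x-y)^2$.

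With this formula the comparison is direct. Since $x+y=x'+y'=s$ and the two triples share the same $z$, the terms $\Psi(s,z)$ coincide and cancel, leaving
\[
B(x,y,z)-B(x',y',z) = \tfrac12\big((x-y)^2-(x'-y')^2\big).
\]
It then remains to bound the right-hand side from below by $2$ under the hypothesis $|x-y|>|x'-y'|$.

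For this last step I would use a parity observation. Both $x-y$ and $x'-y'$ are congruent to $s$ modulo $2$, hence have the same parity, so $|x-y|$ and $|x'-y'|$ differ by an even integer; together with $|x-y|>|x'-y'|\geqslant 0$ this gives $|x-y|-|x'-y'|\geqslant 2$ and hence $|x-y|\geqslant 2$. Factoring
\[
(x-y)^2-(x'-y')^2 = \big(|x-y|-|x'-y'|\big)\big(|x-y|+|x'-y'|\big) \geqslant 2\cdot 2 = 4 ,
\]
where the second factor is at least $|x-y|\geqslant 2$, we conclude $B(x,y,z)-B(x',y',z)\geqslant 2$, which is exactly the assertion. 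The only genuinely delicate point is the boundary count of the first step: one must treat with care the rungs that have exactly one endpoint in $S$ and verify that, once the total size $s$ and the number of rungs $z$ are held fixed, all contributions linear in $x$ cancel and the split-dependent part collapses precisely to $\tfrac12(x-y)^2$. Granting that, the cancellation and the parity estimate are routine, and the same-parity constraint is exactly what produces the clean constant $2$ in the statement.
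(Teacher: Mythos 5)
Your boundary count has a sign error, and it happens to flip the conclusion. Counting exactly as you describe, the cut of a component with $x$ bottom vertices, $y$ top vertices and $z$ rungs inside $S$ has $x(n-x)+y(n-y)$ horizontal edges plus $(x-z)+(y-z)$ rungs with a single endpoint in $S$, i.e.
\[
B\left(x,y,z\right)=x\left(n-x\right)+y\left(n-y\right)+x+y-2z=\left(x+y\right)n+\left(x+y\right)-2z-\frac{\left(x+y\right)^{2}+\left(x-y\right)^{2}}{2},
\]
so with $s=x+y$ and $z$ fixed the split-dependent part is $-\tfrac{1}{2}\left(x-y\right)^{2}$, not $+\tfrac{1}{2}\left(x-y\right)^{2}$: the cross-edge count $x(n-x)+y(n-y)$ is maximized by the balanced split, hence $B$ \emph{decreases} as $\left|x-y\right|$ grows. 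A concrete check: for $n=4$, $z=0$, one has $B\left(4,0,0\right)=4$ (only the four rungs are in the cut) while $B\left(2,2,0\right)=12$. Your parity argument is correct, but applied to the true formula it yields $B\left(x,y,z\right)\leqslant B\left(x',y',z\right)-2$ --- the reverse of the inequality you set out to prove.

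In fairness, the statement as printed in the paper is itself mis-signed: the paper's own proof computes $B\left(x+1,y-1,z\right)-B\left(x,y,z\right)=-2x+2y-2\leqslant-2$ for $x\geqslant y$, i.e., a decrease of at least $2$ per unbalancing step, and the application in Lemma \ref{lem:BBCComparaisonGC} uses precisely this direction: the smaller exponent gives a gain $(1-p)^{-2}\geqslant4$ which absorbs the loss in $P$ via $P\left(x+1,y,z\right)\geqslant\left(1-p\right)^{2}P\left(x,y+1,z\right)$. So you have ``verified'' the typo'd inequality by introducing a compensating sign error in the only computational step of your argument, which is exactly the kind of failure your own closing caveat warned about. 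Once the sign is corrected, your route --- explicit formula, cancellation of the $\Psi\left(s,z\right)$ part, and the same-parity step of size $2$ --- is sound and essentially identical to the paper's reduction to single steps $\left(x',y'\right)\mapsto\left(x'+1,y'-1\right)$, and it proves the corrected statement $B\left(x,y,z\right)\leqslant B\left(x',y',z\right)-2$.
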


\begin{proof}
A simple calculation gives for all $x,y,z$ :
\[
B\left(x,y,z\right)=\left(x+y\right)n-x^{2}+x-y^{2}+y-2z
\]
Then, to prove the result, it is enough to show the result for $x=x'+1\geqslant y=y'-1$.
Using the equality above, we obtain:
\[
B\left(x+1,y-1,z\right)-B\left(x,y,z\right)=-2x+2y-2\leqslant0
\]
\end{proof}
\begin{lem}
\label{lem:BBCComparaisonGC}Let $p\geqslant1/2$ and let $x,x',y,y',z\in\left[0;n\right]\cap\mathbb{N}$
such that $x+y=x'+y'$ and $z\leqslant\min\left(x,x',y,y'\right)$.
If $\left|x-y\right|>\left|x'-y'\right|$ then $\left(\left(1-p\right)^{B}P\right)\left(x,y,z\right)\geqslant\left(\left(1-p\right)^{B}P\right)\left(x',y',z\right)$
\end{lem}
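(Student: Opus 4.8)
The plan is to reduce the statement, via symmetry and telescoping, to a single elementary rebalancing step, and then to turn that step into one clean inequality between connection probabilities which I analyse by conditioning on the part of the graph common to the two sides.

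\emph{Step 1: reduction to an elementary rebalancing.} Since $B$ and $P$ are symmetric in their first two arguments, I may assume $x\geqslant y$ and $x'\geqslant y'$; with $x+y=x'+y'$ the hypothesis $|x-y|>|x'-y'|$ then reads $x>x'\geqslant y'>y$. Hence $(x,y)$ is reached from $(x',y')$ by a chain of elementary steps $(a,b)\mapsto(a+1,b-1)$, each with $a\geqslant b$ and $b-1\geqslant z$, and $z\leqslant\min(x,x',y,y')=y$ stays admissible along the chain. As the claimed inequality is transitive, it suffices to treat one step, i.e. to show for $a\geqslant b$ and $z\leqslant b-1$ that $\bigl((1-p)^{B}P\bigr)(a+1,b-1,z)\geqslant\bigl((1-p)^{B}P\bigr)(a,b,z)$. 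The identity computed in the proof of Lemma~\ref{lem:BBCComparaisonE} gives $B(a+1,b-1,z)-B(a,b,z)=-2(a-b)-2\leqslant-2$, so $(1-p)^{B(a+1,b-1,z)}\geqslant(1-p)^{-2}(1-p)^{B(a,b,z)}$ because $1-p\leqslant1$. Setting $q:=1-p$, the step reduces to the single probabilistic inequality
\[
P(a+1,b-1,z)\ \geqslant\ q^{2}\,P(a,b,z).
\]

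\emph{Step 2: conditioning on the shared core.} Realise the two graphs on a common core $\mathcal K\in G_{a,b-1,z}$ — the $a$ bottom vertices, the $b-1$ top vertices and the $z$ vertical edges shared by both — plus one extra vertex $w$, which in $G_{a,b,z}$ sits on top and is joined to the $b-1$ top vertices of $\mathcal K$, and in $G_{a+1,b-1,z}$ sits on the bottom and is joined to the $a$ bottom vertices of $\mathcal K$. As $w$ is the only vertex outside $\mathcal K$ and all its edges land in $\mathcal K$, the whole graph is connected exactly when $w$ sends an open edge into every component of $\mathcal K$. Writing $C_{1},\dots,C_{m}$ for the components of $\mathcal K$ and $b_{j},t_{j}$ for their numbers of bottom, resp. top, vertices, independence of the edges at $w$ gives the exact identities
\[
P(a,b,z)=\mathbb E\!\left[\prod_{j=1}^{m}\bigl(1-q^{t_{j}}\bigr)\right],\qquad P(a+1,b-1,z)=\mathbb E\!\left[\prod_{j=1}^{m}\bigl(1-q^{b_{j}}\bigr)\right],
\]
the expectation being over the core configuration. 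Thus the step is equivalent to $\mathbb E[\prod_{j}(1-q^{b_{j}})]\geqslant q^{2}\,\mathbb E[\prod_{j}(1-q^{t_{j}})]$.

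\emph{Step 3: the core estimate, and the main obstacle.} This last inequality is the crux, and the only place $p\geqslant1/2$ enters. It cannot be proved configuration by configuration: when $\mathcal K$ has an all-top component ($b_{j}=0$) the left factor vanishes while the right one need not, and even when every component meets both sides the per-configuration ratio $\prod_j(1-q^{b_j})/\prod_j(1-q^{t_j})$ can dip below $q^{2}$, so one is forced to use the average. The mechanism I would use is a measure comparison between the two edge-sets at $w$, combined with opening the (necessarily closed) vertical edges that sever an offending all-top component from the bottom: since $p\geqslant q$, opening an edge multiplies the weight by $p/q\geqslant1$ and so never costs probability, while a clump of $s$ top vertices isolated inside the complete top block forces all $s(b-1-s)$ separating edges to be closed, an event of weight $q^{s(b-1-s)}$, so such configurations are rare. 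The heart of the argument is to balance this boost against the cost of re-routing the edges of $w$ from the top block ($b-1$ edges) to the larger bottom block ($a\geqslant b$ edges), and to absorb the remainder into the $q^{2}$ slack; a convenient coarse lower bound to feed in is $P(a+1,b-1,z)\geqslant\kappa(a+1)\,\kappa(b-1)\,(1-q^{z})$, where $\kappa(k)$ is the probability that $K_{k}$ is connected (both complete blocks internally connected and one vertical edge open). Controlling this average is the genuine difficulty; the value $1/2$ is exactly what the coarse form of the estimate yields, and sharpening it is what the remark following Lemma~\ref{lem:BBCComparaisonI0} alludes to.
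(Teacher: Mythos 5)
Your Steps 1 and 2 are sound. Step 1 is exactly the paper's own reduction: symmetry, transitivity, and the $B$-computation from Lemma~\ref{lem:BBCComparaisonE} reduce the claim to the single-step inequality $P\left(a+1,b-1,z\right)\geqslant q^{2}P\left(a,b,z\right)$ for $a\geqslant b$, $b-1\geqslant z$. Step 2's conditioning identity is correct, and is in fact a cleaner structural reformulation than anything in the paper. The problem is Step 3: it is not a proof. You announce ``the mechanism I would use'', describe a boost-versus-cost balancing that is never carried out, and concede that ``controlling this average is the genuine difficulty''. Nothing in the proposal establishes $\mathbb{E}\left[\prod_{j}\left(1-q^{b_{j}}\right)\right]\geqslant q^{2}\,\mathbb{E}\left[\prod_{j}\left(1-q^{t_{j}}\right)\right]$, so the lemma is left unproven at precisely the point where the hypothesis $p\geqslant1/2$ must enter.

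What you are missing is that no conditioning on the core is needed at all; the paper closes the argument with two one-line global bounds. Upper bound: since the bottom and top parts of a graph in $G_{a,b,z}$ (with $a,b\geqslant1$) communicate only through the $z$ vertical edges, connectivity forces at least one of them to be open, so $P\left(a,b,z\right)\leqslant1-q^{z}$. Lower bound: if both complete blocks are internally connected and at least one vertical edge is open (three independent events, as they involve disjoint edge sets), the graph is connected, so $P\left(a+1,b-1,z\right)\geqslant P\left(a+1,0,0\right)P\left(0,b-1,0\right)\left(1-q^{z}\right)$ --- this is exactly the ``coarse lower bound'' you mention in passing, but you treat it as an ingredient to feed into a harder averaging argument rather than as the finish. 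Dividing, the factors $1-q^{z}$ cancel, and the ratio $P\left(a+1,b-1,z\right)/P\left(a,b,z\right)$ is bounded below by the product of two complete-graph connection probabilities. The one remaining ingredient --- which your proposal also never supplies, since you introduce $\kappa\left(k\right)$ but never bound it --- is that $K_{m}$ is connected with probability at least $1/2$ whenever $p\geqslant1/2$; the paper proves this by a union bound over vertex subsets isolated from their complement for $m\geqslant10$, and a finite check via OEIS A001187 for smaller $m$. This yields a ratio at least $1/4\geqslant q^{2}$, and the lemma follows. So the gap is of the elementary kind: the trivial upper bound $1-q^{z}$, paired with the lower bound you already had in hand, makes the whole of your Step 3 unnecessary.
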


\begin{proof}
Again, to prove this lemma and according to the previous one, it is
enough to prove for $x>y\geqslant z$: 
\begin{equation}
P\left(x+1,y,z\right)\geqslant\left(1-p\right)^{2}P\left(x,y+1,z\right)\label{eq:recurrenceAProuver}
\end{equation}
As an upper bound for $P\left(x,y,z\right)$, we use the probability
that there is at least one vertical edge is open, \emph{i.e.} that
$P\left(x,y,z\right)\leqslant1-\left(1-p\right)^{z}$. Then we lower
$P\left(x,y,z\right)$ by the event that the upper graph is connected
as well as the lower graph and at least one vertical edge open:
\begin{equation}
P\left(x,y,z\right)\geqslant P\left(x,0,0\right)\times P\left(0,y,0\right)\times\left(1-\left(1-p\right)^{z}\right)\label{eq:comparaisonGCLowerBound}
\end{equation}
Furthermore, one can use as a lower bound the probability for a complete
graph to be disconnected is greater that $1-$ the sum of the probability
of any set of vertices of cardinality less that $n/2$ is connected
and isolated from the rest:
\begin{align*}
P\left(n,0,0\right) & \geqslant1-\sum_{k=1}^{n/2}\binom{n}{k}\left(1-p\right)^{k\left(n-k\right)}\\
 & \geqslant2-\sum_{k=0}^{n/2}\binom{n}{k}\left(1-p\right)^{kn/2}\\
 & =2-\left(1+\left(1-p\right)^{n/2}\right)^{n}
\end{align*}
The function $n\mapsto\left(1+\left(1-p\right)^{n/2}\right)^{n}$
is a decreasing function as long as $n\geqslant2$. Thus, for $n\geqslant10$,
we have that:
\[
P\left(n,0,0\right)\geqslant0.6\geqslant1-p
\]
For $n\leqslant10$, one can lower bound the probability for the complete
graph to be connected with $p=1/2$ and use OEISA001187\footnote{This sequence gives the number of connected graph with $n$ vertices.}
to see that this probability is greater than $1/2$. Plugging these
bounds into $\eqref{eq:recurrenceAProuver}$, one obtains:
\[
\frac{P\left(x+1,y,z\right)}{P\left(x,y+1,z\right)}\geqslant P\left(x,0,0\right)\times P\left(0,y,0\right)\geqslant1/4\geqslant\left(1-p\right)^{2}
\]
which shows the desired result.
\end{proof}
In the following, we will use the following equality valid for all
$k\in\mathbb{N}$~:
\begin{equation}
\frac{1}{\left(x-k\right)!}\mathds{1}_{x\geqslant k}=\frac{1}{x!}\prod_{i=0}^{k}\left(x-i\right)\label{eq:BBCFactorielleIndicatrice}
\end{equation}
Recall that we fixed three vertices $u,v$ and $v'$ (the symmetrical
of $v$) of the bunkbed graph such that $u\neq v$ in order to study
the quantity $\mathbb{P}\left(u\leftrightarrow v\right)-\mathbb{P}\left(u\leftrightarrow v'\right)$.
\begin{prop}
\label{pro:BBCPropositionC1}For all $x,y\geqslant z\geqslant1$ such
that $x+y-z\leqslant n$
\[
C_{1}(x,y,z)=\frac{\left(n-2\right)!x\left(x-1\right)}{\left(x-z\right)!z!\left(n-x-y+z\right)!\left(y-z\right)!}
\]
\end{prop}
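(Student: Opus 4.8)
The plan is to count vertex sets directly, since an induced subgraph of $G$ is completely determined by its vertex set. Fix without loss of generality $u=(1,0)$ and $v=(2,0)$, both on the bottom graph with $u\neq v$. Writing the bottom vertices of a candidate subgraph as $A\times\{0\}$ and its top vertices as $B\times\{1\}$ with $A,B\subseteq\{1,\dots,n\}$, the conditions defining $G^{1}_{x,y,z}$ translate to: $\{1,2\}\subseteq A$ (the subgraph contains $u$ and $v$), $|A|=x$, $|B|=y$, and $|A\cap B|=z$ (the number of bottom vertices whose symmetrical also lies in the subgraph). Hence $C_{1}(x,y,z)$ is the number of pairs $(A,B)$ of subsets of $\{1,\dots,n\}$ meeting these four requirements.

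First I would choose $A$. Since it must contain the two marked indices $1,2$ and have size $x$, its remaining $x-2$ elements are taken among the $n-2$ other indices, giving $\binom{n-2}{x-2}$ choices; this forces $x\geqslant2$, matching the factor $x(x-1)$ in the statement. Next, with $A$ fixed, I would split $B$ along $A$: choose its $z$ elements inside $A$ in $\binom{x}{z}$ ways and its $y-z$ elements outside $A$ in $\binom{n-x}{y-z}$ ways. The number of admissible $B$ depends only on $|A|=x$, not on the particular $A$, so
\[
C_{1}(x,y,z)=\binom{n-2}{x-2}\binom{x}{z}\binom{n-x}{y-z}.
\]

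It then remains to expand into factorials and simplify. The factor $(n-x)!$ in the denominator of $\binom{n-2}{x-2}$ cancels the numerator of $\binom{n-x}{y-z}$, and $x!/(x-2)!=x(x-1)$ collapses the surviving numerator to $(n-2)!\,x(x-1)$, producing
\[
C_{1}(x,y,z)=\frac{(n-2)!\,x(x-1)}{(x-z)!\,z!\,(n-x-y+z)!\,(y-z)!}
\]
as claimed. The hypotheses are exactly the well-definedness conditions for these binomials: $z\leqslant\min(x,y)$ keeps $\binom{x}{z}$ and $(y-z)!$ meaningful, while $x+y-z\leqslant n$ is equivalent to $y-z\leqslant n-x$, i.e. to $\binom{n-x}{y-z}\neq0$, reflecting that $|A\cup B|=x+y-z$ cannot exceed $n$. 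There is no deep obstacle here; the only step deserving care is the initial dictionary translating the defining properties of $G^{1}_{x,y,z}$ into the clean set conditions $\{1,2\}\subseteq A$, $|A|=x$, $|B|=y$, $|A\cap B|=z$, after which the computation is a routine binomial simplification.
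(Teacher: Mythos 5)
Your proof is correct and follows essentially the same route as the paper: both count the subgraphs via the product $\binom{n-2}{x-2}\binom{x}{z}\binom{n-x}{y-z}$ (choosing the remaining bottom vertices, then the top vertices inside and outside the chosen column set) and then simplify factorials, with your observation that $x\geqslant2$ is forced playing the role of the paper's indicator $\mathds{1}_{x\geqslant2}$.
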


\begin{proof}
First of all, if $z=0$ and $y>0$, then a graph of $G_{x,y,z}$ can't
be connected. Moreover, $u$ and $v$ must be in the set of vertices
of the graphes of $G_{x,y,z}^{1}$, thus $x$ has to be greater than
2 else $G_{x,y,z}^{1}$ is an empty set. Then, we obtain a graph of
$G_{x,y,z}^{1}$, we choose $x-2$ vertices on the bottom graph among
the $n-2$ vertices left, $z$ vertices above the $x$ vertices previously
chosen, and finally $y-z$ vertices among the $n-x$ vertices left.
We can write:
\[
C_{1}(x,y,z)=\binom{n-2}{x-2}\times\binom{x}{z}\times\binom{n-x}{y-z}\times\mathds{1}_{x\geqslant2}
\]
We can finally conclude using $\eqref{eq:BBCFactorielleIndicatrice}$
:
\begin{eqnarray*}
C_{1}(x,y,z) & = & \frac{\left(n-2\right)!}{\left(n-x\right)!\left(x-2\right)!}\times\frac{x!}{\left(x-z\right)!z!}\times\frac{\left(n-x\right)!}{\left(n-x-y+z\right)!\left(y-z\right)!}\mathds{1}_{x\geqslant2}\\
 & = & \frac{\left(n-2\right)!x\left(x-1\right)}{\left(x-z\right)!z!\left(n-x-y+z\right)!\left(y-z\right)!}
\end{eqnarray*}
\end{proof}
\begin{prop}
\label{pro:BBCPropositionC2}For all $x,y\geqslant z\geqslant1$ such
that $x+y-z\leqslant n$:
\[
C_{2}\left(x,y,z\right)=\frac{\left(n-2\right)!\left(xy-z\right)}{\left(x-z\right)!z!\left(n-x-y+z\right)!\left(y-z\right)!}
\]
\end{prop}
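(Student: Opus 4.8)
\emph{The plan.} Just as in Proposition~\ref{pro:BBCPropositionC1}, I would compute $C_2(x,y,z)=\left|G_{x,y,z}^{2}\right|$ by counting directly, encoding each induced subgraph through the way it distributes the $n$ ``columns'' of the bunkbed graph. Identifying the two copies of $K_n$ by their common index set $\left\{1,\dots,n\right\}$, an induced subgraph belonging to $G_{x,y,z}$ is determined by a pair $(B,T)$ of subsets of $\left\{1,\dots,n\right\}$, where $B$ records the columns whose bottom vertex is taken and $T$ those whose top vertex is taken; the defining data translate into $|B|=x$, $|T|=y$, $u\in B$, and $z=|B\cap T|$, the last quantity being exactly the number of columns both of whose vertices lie in the subgraph. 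The additional constraint defining $G_{x,y,z}^{2}$ is that $v'$ belong to the subgraph, that is, $v\in T$ (with $v\neq u$ since $u\neq v$). Hence $C_2(x,y,z)$ is the number of pairs $(B,T)$ as above with $u\in B$ and $v\in T$.

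\emph{Counting.} I would partition the columns into four labelled classes attached to the pair $(B,T)$: the $z$ ``doubled'' columns $D=B\cap T$, the $x-z$ columns in $B\setminus T$, the $y-z$ columns in $T\setminus B$, and the $n-x-y+z$ columns lying in neither. Ignoring the constraints on $u$ and $v$, such pairs are counted by the multinomial coefficient $\binom{n}{z,\,x-z,\,y-z,\,n-x-y+z}$. To impose $u\in B$ and $v\in T$ I would place the two distinct columns $u$ and $v$ first and split into four cases according to whether each sits in $D$ or in its own singleton class: $(u\in D,\,v\in D)$, $(u\in D,\,v\in T\setminus B)$, $(u\in B\setminus T,\,v\in D)$, and $(u\in B\setminus T,\,v\in T\setminus B)$. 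Each case is then a multinomial coefficient over the remaining $n-2$ columns with the relevant class sizes decremented, and degenerate situations (for instance $z<2$ in the first case) are automatically killed because the corresponding numerical factor vanishes.

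\emph{Simplification.} Factoring out the common quantity $\frac{(n-2)!}{(x-z)!\,z!\,(y-z)!\,(n-x-y+z)!}$, the four cases contribute the factors $z(z-1)$, $z(y-z)$, $z(x-z)$ and $(x-z)(y-z)$ respectively. Their sum telescopes:
\[
z(z-1)+z(y-z)+z(x-z)+(x-z)(y-z)=xy-z,
\]
which gives precisely the claimed value $C_2(x,y,z)=\frac{(n-2)!\,(xy-z)}{(x-z)!\,z!\,(n-x-y+z)!\,(y-z)!}$.

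\emph{Main difficulty.} The computation is elementary, so the only genuine care lies in the bookkeeping: identifying the correct four cases, decrementing the right class sizes in each, and checking that the hypotheses $x,y\geqslant z\geqslant1$ and $x+y-z\leqslant n$ keep every factorial argument nonnegative, so that the boundary situations $z=1$, $x=z$, or $y=z$ (where some cases drop out) are consistent with the vanishing of the corresponding factors. The crux is recognising that the four multinomials combine into the single factor $xy-z$, which distinguishes $C_2$ from the quantity $x(x-1)$ appearing in $C_1$.
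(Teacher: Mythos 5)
Your proof is correct and takes essentially the same approach as the paper: the paper performs the identical four-way case split (according to whether $v$ and $u'$ belong to the vertex set, which is exactly your classification of the columns of $u$ and $v$ into $B\cap T$ versus $B\setminus T$ and $T\setminus B$), obtaining the same factors $z(z-1)$, $z(x-z)$, $z(y-z)$, $(x-z)(y-z)$ whose sum is $xy-z$. The only cosmetic difference is that the paper writes each case as a product of binomial coefficients with indicator functions instead of your multinomial coefficients over column classes.
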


\begin{proof}
First of all, note that a graph of $G_{x,y,z}^{2}$, $u$ and $v'$
has to belong to the set of vertices. Thus, to enumerate the number
of graph $G=\left(V,E\right)$ of $G_{x,y,z}^{2}$, we distinguish
4 différent cases: either $v$ and $u'$ belongs to $V$; either $v\in V$
and $u'\notin V$; either $v\notin V$ and $u'\in V$ ; either $u',v'\in V$.
We can write:
\begin{eqnarray*}
 &  & C_{2}\left(x,y,z\right)\\
 &  & =\binom{n-2}{x-2}\binom{x-2}{z-2}\binom{n-x}{y-z}\times\mathds{1}_{x\geqslant2,y\geqslant2,z\geqslant2}\\
 &  & \quad+\binom{n-2}{x-2}\binom{x-2}{z-1}\binom{n-x}{y-z}\times\mathds{1}_{x>\max\left(1,z\right),y<n}\\
 &  & \quad+\binom{n-2}{x-1}\binom{x-1}{z-1}\binom{n-x-1}{y-z-1}\times\mathds{1}_{x<n,y>\max\left(1,z\right)}\\
 &  & \quad+\binom{n-2}{x-1}\binom{x-1}{z}\binom{n-x-1}{y-z-1}\times\mathds{1}_{\max\left(1,z\right)<x<n,\max\left(1,z\right)<y<n}
\end{eqnarray*}
Using $\left(\ref{eq:BBCFactorielleIndicatrice}\right)$, we obtain:
\begin{eqnarray*}
C_{2}\left(x,y,z\right) & = & \frac{\left(n-2\right)!}{\left(x-z\right)!z!\left(n-x-y+z\right)!\left(y-z\right)!}\times z\left(z-1\right)\\
 &  & +\frac{\left(n-2\right)!}{\left(x-z\right)!z!\left(n-x-y+z\right)!\left(y-z\right)!}\times z\left(x-z\right)\\
 &  & +\frac{\left(n-2\right)!}{\left(x-z\right)!z!\left(n-x-y+z\right)!\left(y-z\right)!}\times z\left(y-z\right)\\
 &  & +\frac{\left(n-2\right)!}{\left(x-z\right)!z!\left(n-x-y+z\right)!\left(y-z\right)!}\times\left(x-z\right)\left(y-z\right)\\
 & = & \frac{\left(n-2\right)!}{\left(x-z\right)!z!\left(n-x-y+z\right)!\left(y-z\right)!}\left(xy-z\right)
\end{eqnarray*}
Which is the desired result.
\end{proof}
\begin{lem}
\label{lem:BBCNegativiteCDiff}For all $x,y,z\in\mathbb{N}\backslash\left\{ 0\right\} $,
there exists an $x_{0}:=x_{0}\left(y,z\right)$ such that $y\leqslant x\leqslant x_{0}\Rightarrow C_{diff}\left(x,y,z\right)\leqslant0$.
\end{lem}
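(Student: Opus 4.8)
The plan is to turn $C_{diff}$ into a closed form via Propositions \ref{pro:BBCPropositionC1} and \ref{pro:BBCPropositionC2}, factor out a common positive term, and reduce the sign of $C_{diff}$ to the sign of an explicit quadratic in $x$. First I would set
\[
D(x,y,z)=\frac{(n-2)!}{(x-z)!\,z!\,(n-x-y+z)!\,(y-z)!},
\]
which is symmetric in its first two arguments and is strictly positive exactly when $z\leqslant\min(x,y)$ and $x+y-z\leqslant n$; outside this range $G_{x,y,z}$ is empty, so $C_1=C_2=0$ and $C_{diff}=0\leqslant0$, making the claim trivial. In particular, since we take $x\geqslant y$, the nondegenerate case forces $z\leqslant y$.

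With this notation, Propositions \ref{pro:BBCPropositionC1} and \ref{pro:BBCPropositionC2} read $C_1(x,y,z)=D(x,y,z)\,x(x-1)$ and $C_2(x,y,z)=D(x,y,z)\,(xy-z)$. Using $D(y,x,z)=D(x,y,z)$, for $x\neq y$ I would compute
\[
C_{diff}(x,y,z)=D(x,y,z)\big[x(x-1)-(xy-z)+y(y-1)-(xy-z)\big]=D(x,y,z)\big[(x-y)^2-(x+y)+2z\big],
\]
while the one-term branch gives $C_{diff}(x,x,z)=D(x,x,z)(z-x)$, whose sign still agrees with $(x-y)^2-(x+y)+2z=2(z-x)$ at $x=y$. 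Hence in all cases the sign of $C_{diff}$ is that of the quadratic $Q(x,y,z):=(x-y)^2-(x+y)+2z$.

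It then remains a one-variable analysis. Writing $t=x-y\geqslant0$ gives $Q=t^2-t-2(y-z)$, and since $z\leqslant y$ the constant $-2(y-z)$ is nonpositive; thus $Q\leqslant0$ at $t=0$ and $t=1$, and $Q$ is increasing for $t\geqslant1$. Consequently $Q(x,y,z)\leqslant0$ if and only if $t(t-1)\leqslant2(y-z)$, i.e.\ for $t\leqslant t^\ast:=\big\lfloor(1+\sqrt{1+8(y-z)})/2\big\rfloor$. Taking $x_0:=y+t^\ast$ yields $C_{diff}(x,y,z)\leqslant0$ for all $y\leqslant x\leqslant x_0$, which is the assertion.

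The work is essentially algebraic, and I expect the only real friction to be bookkeeping: one must cleanly reconcile the two-term ($x\neq y$) and one-term ($x=y$) branches of the definition of $C_{diff}$ and check that their signs coincide, and one must dispose of the degenerate ranges where $G_{x,y,z}$ is empty (so that $D$ is not literally positive) before invoking positivity of $D$. Once $C_{diff}$ collapses to $D\cdot Q$ with $Q$ the displayed quadratic, the existence of the threshold $x_0$ is immediate and no further estimates are needed.
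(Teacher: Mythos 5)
Your proof is correct and follows essentially the same route as the paper: both substitute the closed forms of $C_1$ and $C_2$ from Propositions \ref{pro:BBCPropositionC1} and \ref{pro:BBCPropositionC2}, factor out the common positive term, and reduce the sign of $C_{diff}$ to that of the quadratic $(x-y)^2-(x+y)+2z$, whose interval of nonpositivity in $x$ yields the threshold $x_0$. If anything, your write-up is slightly more careful than the paper's, since you explicitly reconcile the $x=y$ branch (where the paper's displayed formula \eqref{eq:BBCValeurDeCDiff} differs by a factor of $2$ from the one-term definition) and you dispose of the degenerate ranges where $G_{x,y,z}$ is empty before invoking positivity of the common factor.
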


\begin{proof}
Using propositions $\ref{pro:BBCPropositionC1}$ and $\ref{pro:BBCPropositionC2}$,
we have that:
\[
C_{1}\left(x,y,z\right)-C_{2}\left(x,y,z\right)=\frac{\left(n-2\right)!\left(x^{2}-x-xy+z\right)}{\left(x-z\right)!z!\left(n-x-y+z\right)!\left(y-z\right)!}
\]
From the definition of $C_{diff}$, we have that $x\geqslant z\geqslant1$:
\[
C_{diff}\left(x,x,z\right)=\frac{\left(n-2\right)!\left(z-x\right)}{\left(x-z\right)!\left(x-z\right)!z!\left(n-2x+z\right)!}
\]
and for all $x,y\geqslant z\geqslant1$:
\begin{equation}
C_{diff}\left(x,y,z\right)=\frac{\left(n-2\right)!\left(x^{2}-2xy+y^{2}-x-y+2z\right)}{\left(x-z\right)!\left(y-z\right)!z!\left(n-x-y+z\right)!}\label{eq:BBCValeurDeCDiff}
\end{equation}
Thus, we obtain: 
\[
C_{diff}\left(x,y,z\right)\leqslant0\Leftrightarrow x\in\left[y+\frac{1-\sqrt{8y-8z+1}}{2};y+\frac{1+\sqrt{8y-8z+1}}{2}\right]
\]
\end{proof}
Combining lemmas $\ref{lem:BBCComparaisonE}$, $\ref{lem:BBCComparaisonGC}$
and $\ref{lem:BBCNegativiteCDiff}$, lemma $\ref{lem:BBCComparaisonI0}$
is shown.

Then we prove lemma $\ref{lem:BBCsommeCDiff}$ by proving two intermediate
lemmas.
\begin{lem}
\label{lem:BBCSommeCDiffNulle_1}For all $k\geqslant z\geqslant1$,
we have the following equality: 
\[
\sum_{i=0}^{k-z}C_{diff}\left(k+i,k-i,z\right)=0
\]
\end{lem}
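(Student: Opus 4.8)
The plan is to substitute $x=k+i$ and $y=k-i$ directly into the closed form for $C_{diff}$ and reduce the whole sum to a single binomial identity. Writing $m=k-z\geqslant0$, this substitution has two convenient features: first, $x+y=2k$ is constant in $i$, so the factor $(n-2)!/\bigl(z!\,(n-2k+z)!\bigr)$ coming from the denominator of $C_{diff}$ is independent of $i$ and pulls out of the sum; second, $(x-z)!\,(y-z)!=(m+i)!\,(m-i)!$. The decisive point, and the one requiring care, is the diagonal term $i=0$, where $x=y=k$: here $C_{diff}$ must be evaluated with the diagonal formula $C_{diff}(k,k,z)=(n-2)!\,(z-k)/\bigl((m!)^{2}z!\,(n-2k+z)!\bigr)$, which is exactly \emph{half} of what formula \eqref{eq:BBCValeurDeCDiff} would yield if applied blindly at $x=y$. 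Isolating the $i=0$ term at the outset is therefore essential.

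After factoring out the $i$-independent constant, the identity to be proved reduces to
\[
\frac{-m}{(m!)^{2}}+\sum_{i=1}^{m}\frac{4i^{2}-2m}{(m+i)!\,(m-i)!}=0,
\]
where for $i\geqslant1$ the numerator $4i^{2}-2m$ is the value of $(x-y)^{2}-(x+y)+2z$ at $x=k+i,\ y=k-i$. Multiplying through by $(2m)!$ and using $(2m)!/\bigl((m+i)!(m-i)!\bigr)=\binom{2m}{m+i}$, this becomes the purely combinatorial statement
\[
\sum_{i=1}^{m}\bigl(4i^{2}-2m\bigr)\binom{2m}{m+i}=m\binom{2m}{m}.
\]

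To establish this I would pass to the symmetric full sum $S=\sum_{i=-m}^{m}\bigl(4i^{2}-2m\bigr)\binom{2m}{m+i}$. Since the summand is even in $i$ and $\binom{2m}{m+i}=\binom{2m}{m-i}$, one has $S=2\sum_{i=1}^{m}(4i^{2}-2m)\binom{2m}{m+i}-2m\binom{2m}{m}$, so the target identity is equivalent to $S=0$. But $S$ evaluates at once via the elementary moment sums of a symmetric binomial: with $j=m+i$, the identities $\sum_{j=0}^{2m}\binom{2m}{j}=2^{2m}$ and $\sum_{j=0}^{2m}(j-m)^{2}\binom{2m}{j}=m\,2^{2m-1}$ (the centred second moment of $\mathrm{Bin}(2m,1/2)$, rescaled by $2^{2m}$) give $S=4\cdot m\,2^{2m-1}-2m\cdot2^{2m}=0$.

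The only genuine obstacle here is bookkeeping rather than depth: one must separate the diagonal term carefully so that its single (not doubled) contribution is precisely the $m\binom{2m}{m}$ needed to cancel the combined off-diagonal terms. Once the reduction to $S=0$ is in place, the conclusion is an immediate consequence of the standard binomial moment identities.
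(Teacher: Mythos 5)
Your proof is correct, and up to the reduction step it mirrors the paper: both you and the paper isolate the diagonal term $i=0$ (evaluating it with the diagonal formula, which is indeed half of what \eqref{eq:BBCValeurDeCDiff} would give if applied blindly at $x=y$), factor out the $i$-independent constant $(n-2)!/\bigl(z!\,(n-2k+z)!\bigr)$, and reduce the lemma to the factorial identity $\sum_{i=1}^{m}\frac{4i^{2}-2m}{(m+i)!\,(m-i)!}=\frac{m}{(m!)^{2}}$ with $m=k-z$. Where you genuinely diverge is in how this identity is established. The paper proceeds by telescoping: it introduces the auxiliary quantity $\frac{(2i-1)(m+i)}{(m+i)!\,(m-i)!}$, checks a two-term recursion for $2\leqslant i<m$ together with boundary identities at $i=1$ and $i=m$, and sums the chain. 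You instead multiply through by $(2m)!$ to obtain $\sum_{i=1}^{m}(4i^{2}-2m)\binom{2m}{m+i}=m\binom{2m}{m}$, symmetrize over $-m\leqslant i\leqslant m$, and evaluate the full sum using the standard identities $\sum_{j}\binom{2m}{j}=2^{2m}$ and $\sum_{j}(j-m)^{2}\binom{2m}{j}=m\,2^{2m-1}$. Both arguments are complete (including the degenerate case $m=0$, which is trivial in either treatment). Your route is arguably more transparent: the vanishing of the sum is explained conceptually by the variance of $\mathrm{Bin}(2m,1/2)$ rather than by a telescoping ansatz that must be guessed and then verified, and the same symmetrization — this time around the half-integer centre $m+\tfrac{1}{2}$, with no diagonal term to worry about — disposes of the companion Lemma \ref{lem:BBCSommeCDiffNulle_2} just as quickly. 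What the paper's route buys is self-containment: nothing beyond factorial cancellations is invoked, at the cost of presenting identities whose origin is opaque.
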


\begin{proof}
To prove this lemma, it is easier to show that: 
\[
\sum_{i=1}^{k-z}C_{diff}\left(k+i,k-i,z\right)=-C_{diff}\left(k,k,z\right)
\]
Using the argument of the function $C_{diff}$, the 3-tuple $\left(k+i,k-i,z\right)$,
some of the factors$\left(\ref{eq:BBCValeurDeCDiff}\right)$ are independent
of $i$. Indeed, we have:
\[
C_{diff}\left(k+i,k-i,z\right)=\frac{4i^{2}-2k+2z}{\left(k+i-z\right)!\left(k-i-z\right)!}\times\frac{\left(n-2\right)!}{z!\left(n-2k+z\right)!}
\]
So to prove the lemma, it is enough to show that:
\[
\sum_{i=1}^{k-z}\frac{4i^{2}-2k+2z}{\left(k+i-z\right)!\left(k-i-z\right)!}=\frac{k-z}{\left(k-z\right)!\left(k-z\right)!}
\]
Note that:
\begin{eqnarray*}
\frac{k-z}{\left(k-z\right)!\left(k-z\right)!} & = & \frac{4-2k+2z}{\left(k+1-z\right)!\left(k-1-z\right)!}\\
 &  & +\frac{3\left(k+2-z\right)}{\left(k+2-z\right)!\left(k-2-z\right)!}\mathds{1}_{k-z\geqslant2}
\end{eqnarray*}
and for all $k-z>i\geqslant2$:
\begin{eqnarray*}
\frac{\left(2i-1\right)\left(k+i-z\right)}{\left(k+i-z\right)!\left(k-i-z\right)!} & = & \frac{4i^{2}-2k+2z}{\left(k+i-z\right)!\left(k-i-z\right)!}\\
 &  & +\frac{\left(2i+1\right)\left(k+i+1-z\right)}{\left(k+i+1-z\right)!\left(k-i-1-z\right)!}
\end{eqnarray*}
And when $i=k-z$, then:
\[
\frac{\left(2i-1\right)\left(k+i-z\right)}{\left(k+i-z\right)!\left(k-i-z\right)!}=\frac{4\left(k-z\right)^{2}-2k+2z}{\left(2k-2z\right)!}
\]
Which concludes the proof.
\end{proof}
\begin{lem}
\label{lem:BBCSommeCDiffNulle_2}For all $k\geqslant z\geqslant1$,
the following equality holds: 
\[
\sum_{i=0}^{k-z}C_{diff}\left(k+i+1,k-i,z\right)=0
\]
\end{lem}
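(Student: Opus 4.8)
The plan is to mirror the telescoping argument used for Lemma~\ref{lem:BBCSommeCDiffNulle_1}, adapting it to the shifted index. First I would substitute $x=k+i+1$ and $y=k-i$ into the explicit formula \eqref{eq:BBCValeurDeCDiff}. Since $x-y=2i+1$ and $x+y=2k+1$, the polynomial factor in the numerator becomes $(x-y)^2-(x+y)+2z=4i^2+4i-2k+2z$, while the factors $z!$ and $(n-x-y+z)!=(n-2k-1+z)!$ do not depend on $i$. Thus
\[
C_{diff}\left(k+i+1,k-i,z\right)=\frac{4i^2+4i-2k+2z}{\left(k+i+1-z\right)!\left(k-i-z\right)!}\times\frac{\left(n-2\right)!}{z!\left(n-2k-1+z\right)!},
\]
and pulling out the $i$-independent constant reduces the claim to the purely combinatorial identity
\[
\sum_{i=0}^{k-z}\frac{4i^2+4i-2k+2z}{\left(k+i+1-z\right)!\left(k-i-z\right)!}=0 .
\]
Unlike in Lemma~\ref{lem:BBCSommeCDiffNulle_1}, the two arguments $k+i+1$ and $k-i$ differ by the odd number $2i+1$ and are therefore never equal, so there is no diagonal term to separate off and the full sum starting at $i=0$ should vanish on its own.

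The key step is to exhibit the correct telescoping term. I would set
\[
S_i=\frac{2i}{\left(k+i-z\right)!\left(k-i-z\right)!}
\]
and verify that $S_i-S_{i+1}$ equals the summand above. Putting the two fractions over the common denominator $\left(k+i+1-z\right)!\left(k-i-z\right)!$, this amounts to the polynomial identity $2i\left(k+i+1-z\right)-2\left(i+1\right)\left(k-i-z\right)=4i^2+4i-2k+2z$, which follows by direct expansion. The choice of $S_i$ is the one genuinely creative step; it is guided by the analogous term $\frac{\left(2i-1\right)\left(k+i-z\right)}{\left(k+i-z\right)!\left(k-i-z\right)!}$ appearing in the proof of Lemma~\ref{lem:BBCSommeCDiffNulle_1}. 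I expect locating it to be the main, though modest, obstacle: one can alternatively posit a linear numerator $\alpha(i)=ai+b$, impose $\alpha(i)\left(k+i+1-z\right)-\alpha(i+1)\left(k-i-z\right)=4i^2+4i-2k+2z$, and solve, which forces $a=2$ and $b=0$.

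Finally, with the telescoping in hand the sum collapses to $S_0-S_{k-z+1}$. Here $S_0=0$ because its numerator is $2\cdot0$, and $S_{k-z+1}=0$ because the factorial $\left(k-\left(k-z+1\right)-z\right)!=\left(-1\right)!$ is infinite, so its reciprocal vanishes under the convention, used repeatedly above, that the reciprocal of a negative integer's factorial is zero. Hence the combinatorial sum is zero, and since the discarded factor is independent of $i$ this gives $\sum_{i=0}^{k-z}C_{diff}\left(k+i+1,k-i,z\right)=0$, as required. A quick consistency check on the last genuine term $i=k-z$, where the summand equals $\frac{2(k-z)}{\left(2k-2z\right)!}=S_{k-z}$, confirms the bookkeeping. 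Combining this lemma with Lemma~\ref{lem:BBCSommeCDiffNulle_1} then yields Lemma~\ref{lem:BBCsommeCDiff}.
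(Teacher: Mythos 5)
Your proof is correct and follows essentially the same route as the paper: reduce via the explicit formula \eqref{eq:BBCValeurDeCDiff} to the combinatorial identity $\sum_{i=0}^{k-z}\frac{4i^2+4i-2k+2z}{(k+i+1-z)!(k-i-z)!}=0$ and telescope, your term $S_i=\frac{2i}{(k+i-z)!(k-i-z)!}$ being (twice) the paper's telescoping quantity $\frac{i(k+i+1-z)}{(k+i+1-z)!(k-i-z)!}$. The only difference is organizational: you telescope the full sum from $i=0$ and dispose of both boundary terms at once (using $S_0=0$ and the convention $1/(-1)!=0$), whereas the paper moves the $i=0$ term to the right-hand side and checks the endpoint identities at $i=1,2$ and $i=k-z$ explicitly.
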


\begin{proof}
Following the proof of the lemma $\ref{lem:BBCSommeCDiffNulle_1}$,
it is enough to show:
\[
\sum_{i=1}^{k-z}\frac{2i^{2}+2i-k+z}{\left(k+i+1-z\right)!\left(k-i-z\right)!}=\frac{k-z}{\left(k+1-z\right)!\left(k-z\right)!}
\]
Since:
\begin{eqnarray*}
\frac{k-z}{\left(k+1-z\right)!\left(k-z\right)!} & = & \frac{4-k+z}{\left(k+2-z\right)!\left(k-1-z\right)!}\\
 &  & +\frac{2\left(k-z+3\right)}{\left(k+3-z\right)!\left(k-2-z\right)!}\mathds{1}_{k-z\geqslant2}
\end{eqnarray*}
and for all $k-z>i\geqslant2$, we have:
\begin{eqnarray*}
\frac{i\left(k+i+1-z\right)}{\left(k+i-z\right)!\left(k-i-z\right)!} & = & \frac{2i^{2}+2i-k-z}{\left(k+i-z\right)!\left(k-i-z\right)!}\\
 &  & +\frac{\left(i+1\right)\left(k+i+2-z\right)}{\left(k+i+1-z\right)!\left(k-i-1-z\right)!}
\end{eqnarray*}
and when $i=k-z$,
\[
\frac{i\left(k+i+1-z\right)}{\left(k+i+1-z\right)!\left(k-i-1-z\right)!}=\frac{2\left(k-z\right)^{2}+2\left(k-z\right)-k+z}{\left(2k-2z+1\right)!}
\]
Which concludes the proof of the lemma.
\end{proof}
The proof of the lemma $\ref{lem:BBCsommeCDiff}$ is the combination
of the lemmas $\ref{lem:BBCSommeCDiffNulle_1}$ and $\ref{lem:BBCSommeCDiffNulle_2}$.

\section{Proof of Auxiliary Results and Remarks }

\subsection{Proof of the Proposition $\ref{prop:BunkbedMoyenne}$}

Recall that the bunkbed conjecture can be reformulated with a set
of parameter of percolation constrained as explained in the second
section. In this context, we prove that the bunkbed conjecture is
verified in mean by considering two independent random variables $X$
and $Y$ identically distributed on the vertices of the bottom graph.
\begin{proof}[Proof of Proposition $\ref{prop:BunkbedMoyenne}$]
We show a slightly stronger result, for all configuration $\omega$,
\[
E\left[\mathds{1}_{X\overset{\omega}{\leftrightarrow}Y}+\mathds{1}_{X'\overset{\omega}{\leftrightarrow}Y'}\right]\geqslant E\left[\mathds{1}_{X\overset{\omega}{\leftrightarrow}Y'}+\mathds{1}_{X'\overset{\omega}{\leftrightarrow}Y}\right]
\]
Given a configuration $\omega$, we look at all of its clusters. For
all $x$, we note $A\left(x\right)$ the cluster containing $x$ intersected
with the set of the vertices of the bottom and $B\left(x\right)$
the cluster containing $x$ intersected with the set of the vertices
of the top graph so that: 
\begin{eqnarray}
 &  & E\left[\mathds{1}_{X\overset{\omega}{\leftrightarrow}Y}+\mathds{1}_{X'\overset{\omega}{\leftrightarrow}Y'}\right]-E\left[\mathds{1}_{X\overset{\omega}{\leftrightarrow}Y'}+\mathds{1}_{X'\overset{\omega}{\leftrightarrow}Y}\right]\nonumber \\
 & = & \sum_{x}P\left(X=x\right)\left[\sum_{y\in A\left(x\right)}P\left(Y=y\right)-\sum_{y'\in B\left(x\right)}P\left(Y=y'\right)\right]\nonumber \\
 &  & +\sum_{x'}P\left(X=x'\right)\left[\sum_{y\in B\left(x'\right)}P\left(Y=y'\right)-\sum_{y'\in A\left(x\right)}P\left(Y=y'\right)\right]\nonumber \\
 & = & \sum_{x}P\left(X=x\right)\left[P\left(Y\in A\left(x\right)\right)-P\left(Y\in B\left(x\right)\right)\right]\nonumber \\
 &  & +\sum_{x'}P\left(X=x'\right)\left[P\left(Y\in B\left(x'\right)\right)-P\left(Y\in A\left(x'\right)\right)\right]\label{eq:BBCResultatMoyenE1}
\end{eqnarray}
Then, we sum over the different clusters $C\in C\left(\omega\right)$
instead of the vertices and we note for each cluster a representative
$x_{0}\coloneqq x_{0}\left(C\right)$, so we get: 
\begin{eqnarray*}
\left(\ref{eq:BBCResultatMoyenE1}\right) & = & \sum_{C\in C\left(\omega\right)}\sum_{x\in A\left(x_{0}\right)}P\left(X=x\right)\left[P\left(Y\in A\left(x\right)\right)-P\left(Y\in B\left(x\right)\right)\right]\\
 &  & +\sum_{x\in B\left(x_{0}\right)}P\left(X=x'\right)\left[P\left(Y\in B\left(x'\right)\right)-P\left(Y\in A\left(x'\right)\right)\right]\\
 & = & \sum_{C\in C\left(\omega\right)}\left[P\left(X\in A\left(x_{0}\right)\right)-P\left(X\in B\left(x_{0}\right)\right)\right]\\
 &  & \qquad\qquad\times\left[P\left(Y\in A\left(x_{0}\right)\right)-P\left(Y\in B\left(x_{0}\right)\right)\right]\\
 & = & \sum_{C\in C\left(\omega\right)}\left[P\left(X\in A\left(x_{0}\right)\right)-P\left(X\in B\left(x_{0}\right)\right)\right]^{2}\geqslant0
\end{eqnarray*}
Which proves the result.
\end{proof}

\subsection{Proof of Proposition $\ref{prop:BunkbedUpperBound}$}

The goal of the conjecture is to lower bound the quantity $\mathbb{P}_{p}\left(u\leftrightarrow v\right)-\mathbb{P}_{p}\left(u\leftrightarrow v'\right)$
by $0$. We give a simple upper bound by proving Proposition $\ref{prop:BunkbedUpperBound}$.
\begin{proof}
Note that: 
\begin{eqnarray}
 &  & \mathbb{P}_{p}\left(u\leftrightarrow v\right)\nonumber \\
 & = & \mathbb{P}_{p}\left(u\leftrightarrow v\cap v\leftrightarrow v'\right)+\mathbb{P}_{p}\left(u\leftrightarrow v\cap v\not\leftrightarrow v'\cap u\leftrightarrow u'\right)\nonumber \\
 &  & \qquad+\mathbb{P}_{p}\left(u\leftrightarrow v\cap v\not\leftrightarrow v'\cap u\not\leftrightarrow u'\right)\label{eq:BBCMajorationE1}\\
 &  & \mathbb{P}_{p}\left(u\leftrightarrow v'\right)\nonumber \\
 & = & \mathbb{P}_{p}\left(u\leftrightarrow v'\cap v\leftrightarrow v'\right)+\mathbb{P}_{p}\left(u\leftrightarrow v'\cap v\not\leftrightarrow v'\cap u\leftrightarrow u'\right)\nonumber \\
 &  & \qquad+\mathbb{P}_{p}\left(u\leftrightarrow v'\cap v\not\leftrightarrow v'\cap u\not\leftrightarrow u'\right)\label{eq:BBCMajorationE2}
\end{eqnarray}
The first term of the right-hand of $\eqref{eq:BBCMajorationE1}$
and $\eqref{eq:BBCMajorationE2}$ are obviously equal as well as the
second member by an argument of symmetry. Thus we get: 
\begin{eqnarray*}
 &  & \mathbb{P}_{p}\left(u\leftrightarrow v\right)-\mathbb{P}_{p}\left(u\leftrightarrow v'\right)\\
 & = & \mathbb{P}_{p}\left(u\leftrightarrow v\cap v\not\leftrightarrow v'\cap u\not\leftrightarrow u'\right)-\mathbb{P}_{p}\left(u\leftrightarrow v'\cap v\not\leftrightarrow v'\cap u\not\leftrightarrow u'\right)\\
 & = & \left(\mathbb{P}_{p}\left(u\leftrightarrow v|v\not\leftrightarrow v'\cap u\not\leftrightarrow u'\right)-\mathbb{P}_{p}\left(u\leftrightarrow v'|v\not\leftrightarrow v'\cap u\not\leftrightarrow u'\right)\right)\\
 &  & \quad\times\mathbb{P}_{p}\left(v\not\leftrightarrow v'\cap u\not\leftrightarrow u'\right)\\
 & \geqslant & -\mathbb{P}_{p}\left(v\not\leftrightarrow v'\cap u\not\leftrightarrow u'\right)
\end{eqnarray*}
We proceed in the same way to lower bound $\mathbb{P}_{p}\left(u\leftrightarrow v'\right)-\mathbb{P}_{p}\left(u\leftrightarrow v\right)$
and obtain the desired result.
\end{proof}

\subsection{Trivial Case: the Line Segment}

When we consider the bunkbed graph of the line segment, it is possible
to quantify the difference of $\mathbb{P}_{p}\left(u\leftrightarrow v\right)-\mathbb{P}_{p}\left(u\leftrightarrow v'\right)$.
Consider the line segment graph with $n$ vertices and consider its
bunkbed graph $G$. Note $u$ the point $\left(1,0\right)$ and $v$
the point $\left(n,0\right)$. By symmetry, if the edge $\left\{ \left(i,0\right),\left(i,1\right)\right\} $
is open then $\mathbb{P}\left(\left(i,0\right)\leftrightarrow v\right)=\mathbb{P}\left(\left(i,1\right)\leftrightarrow v\right)=\mathbb{P}\left(\left(i,0\right)\leftrightarrow v'\right)$.
So, noting $\tau$ the index of the first open vertical edge (starting
from the left), and by convention, $\tau=0$ if none are opened, we
get: 
\[
\mathbb{P}\left(u\leftrightarrow v\right)=\sum_{i=1}^{n}\mathbb{P}\left(u\leftrightarrow\left(i,0\right)\cap\left(i,0\right)\leftrightarrow v\cap\tau=i\right)+\mathbb{P}\left(u\leftrightarrow v\cap\tau=0\right)
\]
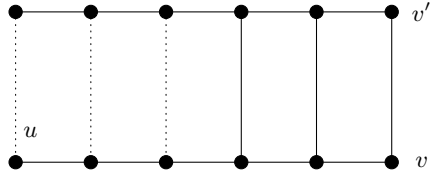
\begin{figure}[!h]
\begin{center}\begin{tikzpicture}[line cap=round,line join=round,>=triangle 45,x=1.0cm,y=1.0cm] \clip(1.68,1.6) rectangle (7.7,4.7); \draw (2.,2.)-- (3.,2.); \draw (3.,2.)-- (4.,2.); \draw (4.,2.)-- (5.,2.); \draw (5.,2.)-- (6.,2.); \draw (6.,2.)-- (7.,2.); \draw (2.,4.)-- (3.,4.); \draw (3.,4.)-- (4.,4.); \draw (4.,4.)-- (5.,4.); \draw (5.,4.)-- (6.,4.); \draw (6.,4.)-- (7.,4.); \draw (5.,4.)-- (5.,2.); \draw (6.,4.)-- (6.,2.); \draw (7.,4.)-- (7.,2.); \draw [dotted] (2.,4.)-- (2.,2.); \draw [dotted] (3.,4.)-- (3.,2.); \draw [dotted] (4.,4.)-- (4.,2.); 
\begin{scriptsize}
\draw [fill=black] (2.,2.) circle (2.5pt);
\draw[color=black] (2.2,2.4) node {$u$}; 
\draw [fill=black] (3.,2.) circle (2.5pt); \draw [fill=black] (4.,2.) circle (2.5pt); \draw [fill=black] (5.,2.) circle (2.5pt); \draw [fill=black] (6.,2.) circle (2.5pt); \draw [fill=black] (7.,2.) circle (2.5pt);
\draw[color=black] (7.4,2) node {$v$}; 
\draw [fill=black] (2.,4.) circle (2.5pt); \draw [fill=black] (3.,4.) circle (2.5pt); \draw [fill=black] (4.,4.) circle (2.5pt); \draw [fill=black] (5.,4.) circle (2.5pt); \draw [fill=black] (6.,4.) circle (2.5pt);
\draw [fill=black] (7.,4.) circle (2.5pt);
\draw[color=black] (7.4,4) node {$v'$}; 
\end{scriptsize} \end{tikzpicture}\end{center}\caption{\label{fig:BBCechelletau}Case where $\tau=4$ and $n=6$}

\end{figure}
Or, the part before the vertical edge $e_{v_{i}}$ is independent
of the part after, conditionally to $\tau=i$ (see figure $\ref{fig:BBCechelletau}$),
which gives:
\begin{eqnarray*}
 &  & \mathbb{P}\left(u\leftrightarrow v\right)\\
 & = & \sum_{i=1}^{n}\mathbb{P}\left(u\leftrightarrow\left(i,0\right)|\tau=i\right)\mathbb{P}\left(\left(i,0\right)\leftrightarrow v|\tau=i\right)\mathbb{P}\left(\tau=i\right)\\
 &  & \qquad\qquad+\mathbb{P}\left(u\leftrightarrow v\cap\tau=0\right)\\
 & = & \sum_{i=1}^{n}\mathbb{P}\left(u\leftrightarrow\left(i,0\right)|\tau=i\right)\mathbb{P}\left(\left(i,0\right)\leftrightarrow v'|\tau=i\right)\mathbb{P}\left(\tau=i\right)\\
 &  & \qquad\qquad+\mathbb{P}\left(u\leftrightarrow v\cap\tau=0\right)\\
 & = & \mathbb{P}\left(u\leftrightarrow v'\right)+\mathbb{P}\left(u\leftrightarrow v\cap\tau=0\right)
\end{eqnarray*}
The difference of the probability is therefore given by: 
\[
\mathbb{P}\left(u\leftrightarrow v\cap\tau=0\right)=\prod_{i=1}^{n-1}\left(1-p_{v_{i}}\right)p_{\left\{ \left(i,0\right)\left(i+1,0\right)\right\} }
\]
We can remark that in this case, the difference of probabilities is
the case where all the vertical edges are closed and $u$ is connected
to $v$. In the proof of the theorem $\ref{thm:BBCTheoremePrincipal}$,
we showed that the difference is strictly greater than this case (this
can be seen in the case of the bunkbed graph of the triangle). In
conclusion, when we consider $p=1/2$ in the theorem $\ref{thm:BBCTheoremePrincipal}$,
it indicates that it would be a difficult task to build a surjection
between the configurations connecting $u$ to $v$ and those connecting
$u$ to $v'$. A second argument supporting this conclusion is the
change of the sign intervening in the quantity $C_{diff}$.

\bibliographystyle{plain}
\bibliography{bibtex_these-Bunkbed}

\end{document}